\def\BState{\State\hskip-\ALG@thistlm}
\newtheorem{proposition}{Proposition}
\newtheorem{definition}{Definition}
\newtheorem{theorem}{Theorem}
\newtheorem{lemma}{Lemma}
\newtheorem{remark}{Remark}
\newtheorem{corollary}{Corollary}
\newenvironment{proof}[1][Proof]{\begin{trivlist}
\item[\hskip \labelsep {\bfseries #1}]}{\end{trivlist}}
\begin{document}

\begin{frontmatter}

\title{A Multiple-Comparison-Systems Method for Distributed Stability Analysis of Large-Scale Nonlinear Systems}


\author[SK]{Soumya Kundu}\ead{soumya@lanl.gov},    
\author[MA]{Marian Anghel}\ead{manghel@lanl.gov}               

\address[SK]{Information Sciences Group (CCS-3) and Center for Nonlinear Studies, Los Alamos National Laboratory, Los Alamos, USA}  
\address[MA]{Information Sciences Group (CCS-3), Los Alamos National Laboratory, Los Alamos, USA}             

\begin{keyword}                           
Lyapunov stability, dynamical systems, sum-of-squares optimization, disturbance analysis, interconnected systems.            
\end{keyword}                             

\begin{abstract}                          
Lyapunov functions provide a tool to analyze the stability of nonlinear systems without extensively solving the dynamics. Recent advances in sum-of-squares methods have enabled the algorithmic computation of Lyapunov functions for polynomial systems. However, for general large-scale nonlinear networks it is yet very difficult, and often impossible, both computationally and analytically, to find Lyapunov functions. In such cases, a system decomposition coupled to a vector Lyapunov functions approach provides a feasible alternative by analyzing the stability of the nonlinear network through a reduced-order comparison system. However, finding such a comparison system is not trivial and often, for a nonlinear network, there does not exist a single comparison system. In this work, we propose a multiple comparison systems approach for the algorithmic stability analysis of nonlinear systems. Using sum-of-squares methods we design a scalable and distributed algorithm which enables the computation of  comparison systems using only communications between the neighboring subsystems. We demonstrate the algorithm by applying it to an arbitrarily generated network of interacting Van der Pol oscillators.
\end{abstract}

\end{frontmatter}

\section{Introduction}
A key to maintaining the successful operation of real-world engineering systems is to analyze the stability of the systems under disturbances. Lyapunov functions methods provide powerful tools to directly certify stability under disturbances, without solving the complex nonlinear dynamical equations \cite{Lyapunov:1892,Haddad:2008}. However for a general nonlinear system, there is no universal expression for Lyapunov functions. Recent advances in sum-of-squares (SOS) methods and semi-definite programming (SDP), \cite{sostools13,Antonis:2005a,Sturm:1999}, have enabled the algorithmic construction of polynomial Lyapunov functions for nonlinear systems that can be expressed as a set of polynomial differential algebraic equations \cite{PPC:2005, Chesi:2011}. Unfortunately, such computational methods suffer from scalability issues and, in general, become intractable as the system size grows
 \cite{Anderson:2011}. 
 { For this reason more tractable alternatives to SOS optimization have been proposed. 
 One such approach, known as DSOS and SDSOS optimization,  is significantly more scalable since it relies on linear programming and second order cone programming~\cite{Ahmadi:2014}.
 A different approach chooses Lyapunov functions with a chordal graphical structure in order to convert the semidefinite constraints into an equivalent set of smaller semidefinite constraints which can be exploited to solve the SDP programs more efficiently~ \cite{Mason:2014}. Nevertheless, the increased scalability decreases performance since both approximations are usually more conservative than SOS approaches. 
 
Despite these computational advances, global analysis of large-scale systems remains problematic when computational and communication costs are considered. Often, a decomposition-aggregation approach offers a scalable distributed  computing framework,  together with a  flexible analysis of structural perturbations~\cite{Siljak:1978} and decentralized control 
designs~\cite{Siljak:1991}, as required by the locality of perturbations. }
Thus, for large-scale systems,  it is often useful to model the system as a network of small interacting subsystems and study the stability of the full interconnected system with the help of the Lyapunov functions of the isolated subsystems. For example, one approach is to construct a scalar Lyapunov function expressed as a weighted sum of the subsystem Lyapunov functions and use it to certify stability of the full system \cite{Siljak:1972,Weissenberger:1973, Michel:1983,Araki:1978}. However, such a method requires centralized computations and does not scale well with the size of the network. Alternatively, methods based on vector Lyapunov functions, \cite{Bellman:1962,Bailey:1966}, are computationally very attractive due to their parallel structure and scalability{, and have generated considerable interest in recent times \cite{Karafyllis:2015,Kundu:2015ACC,Kundu:2015ECC,Xu:2016}. However, applicability of these methods to large-scale nonlinear systems with guaranteed rate of convergence still remain to be explored, for example \cite{Kundu:2015ACC,Kundu:2015ECC} consider asymptotic stability while the works in \cite{Karafyllis:2015,Xu:2016} are demonstrated on small-scale systems.} 

Inspired by the results on comparison systems, \cite{Conti:1956,Brauer:1961,Beckenbach:1961}, it has been observed that the problem of stability analysis of an interconnected nonlinear system can be reduced to the stability analysis of a linear dynamical system (or, `single comparison system') whose state space consists of the subsystem Lyapunov functions. Success of finding such stable linear comparison system then guarantees exponential stability of the full interconnected nonlinear system. However, for a given interconnected system, computing these comparison systems still remained a challenge. In absence of suitable computational tools, analytical insights were used to build those comparison systems, such as  trigonometric inequalities in power systems networks \cite{Jocic:1978}. {In a recent work \cite{Kundu:2015CDC}, SOS-based direct methods were used to compute the single comparison system for generic nonlinear polynomial systems, with some performance improvements over the traditional methods. However there are major challenges before such a method can be used in large-scale systems. For example, it is generally difficult to construct a single comparison system that can guarantee stability under a wide set of disturbances. Also, while \cite{Kundu:2015CDC} presents a decentralized analysis where the computational burden is shared between the subsystems, the scalability of the analysis is largely dependent on the cumulative size of the neighboring subsystems. 

In this article we present a novel conceptual and computational framework which generalizes the single comparison system approach into a sequence of stable comparison systems, that collectively ascertain stability, while also offering better scalability by parallelizing the subsystem-level SOS problems. The set of multiple comparison systems are to be constructed adaptively in real-time, after a disturbance has occurred.} With the help of SOS and semi-definite programming methods, we develop a fully distributed, parallel and scalable algorithm that enables computation of the comparison systems under a disturbance, with only minimal communication between the immediate neighbors. While this approach is applicable to any generic dynamical system, we choose an arbitrarily generated network of modified\footnote{Parameters are chosen to make the equilibrium point stable.} Van der Pol oscillators \cite{van:1926} for illustration. Under a disturbance, the subsystems communicate with their neighbors to algorithmically construct a set of multiple comparison systems, the successful construction of which can certify stability of the network. The rest of this article is organized as follows. Following some brief background in Section\,\ref{S:background} we describe the problem in Section\,\ref{S:problem}. We present the traditional approach to single comparison systems and an SOS-based direct method of computing the comparison systems in Section\,\ref{S:comparison}. In Section\,\ref{S:multiple}, we introduce the concept of multiple comparison systems, and propose a parallel and distributed algorithmic construction of the comparison systems in real-time. We demonstrate an application of this algorithm to a network of Van der Pol oscillators in Section\,\ref{S:results}, before concluding the article in Section\,\ref{S:concl}.

\section{Preliminaries}
\label{S:background}


Let us consider the dynamical system 
\begin{align}\label{E:f}
&\dot{x}\left(t\right) = f\left(x\left(t\right)\right),\quad t\geq 0,~x\in\mathbb{R}^n, ~f\left(0\right)=0\,,
\end{align}
with an equilibrium at the origin\footnote{Note that by shifting  the state variables any equilibrium point of interest can be moved to the origin.}, and $f:\mathbb{R}^n\rightarrow \mathbb{R}^n$ is locally Lipschitz. 
Let us use $\left|\,\cdot\,\right|$ to denote both the Euclidean norm (for a vector) and the absolute value (for a scalar). 
\begin{definition}\label{D:stability}
The equilibrium point at the origin is said to be asymptotically stable in a domain $\mathcal{D}\!\subseteq\!\mathbb{R}^n,\,0\!\in\!\mathcal{D},$ if $\lim_{t\rightarrow\infty}\left\vert x(t)\right\vert \!=\!0\,$ for every $\left\vert x(0)\!\right\vert \ \!\!\!\!\in\!\! \mathcal{D}$,
and it is exponentially stable if there exists $b,\,c \!>\! 0$ such that $\left\vert x(t)\right\vert \!<\!ce^{-bt}\!\left\vert  x(0)\right\vert \,\,\forall t\!\geq\! 0\,$, for every $\left\vert x(0)\!\right\vert \ \!\!\!\!\in\!\! \mathcal{D}$.
\end{definition}

\begin{theorem}\label{T:Lyap}
(Lyapunov, \cite{Lyapunov:1892}, \cite{Khalil:1996}, Thm. 4.1) If there exists a domain $\mathcal{D}\!\!\subseteq\!\!\mathbb{R}^n$, $0\!\in\!\!\mathcal{D}$, and a continuously differentiable positive definite function {$\tilde{V}\!\!:\!\mathcal{D}\!\rightarrow\! \mathbb{R}_{\geq 0}$}, i.e. the `Lyapunov function' (LF), then the equilibrium point of \eqref{E:f} at the origin is asymptotically stable if $\nabla{\tilde{V}}^T\!\!f(x)$ is negative definite in $\mathcal{D}$, and is exponentially stable if $\nabla{\tilde{V}}^T\!\!f(x)\leq\!-\alpha\, \tilde{V}~\forall x\!\in\!\mathcal{D}$, for some $\alpha>0$.
\end{theorem}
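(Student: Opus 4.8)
The plan is to treat the two claims separately, in both cases using $\tilde V$ as a certificate whose sublevel sets trap the trajectory near the origin; this is the classical Lyapunov argument (cf.\ \cite{Khalil:1996}, Thm.\,4.1) and the proof is essentially an assembly of standard compactness and monotonicity facts once forward invariance inside $\mathcal D$ is secured.

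For the asymptotic claim I would first establish stability in the sense of Lyapunov. Fix $\varepsilon>0$ small enough that the closed ball $\overline{B}_\varepsilon=\{|x|\le\varepsilon\}\subseteq\mathcal D$, set $\beta=\min_{|x|=\varepsilon}\tilde V(x)$, which is strictly positive by positive definiteness and compactness, pick any $c\in(0,\beta)$, and consider the sublevel set $\Omega_c=\{x\in\overline{B}_\varepsilon:\tilde V(x)\le c\}$. One checks that $\Omega_c$ lies in the open ball $B_\varepsilon$, and since $\dot{\tilde V}=\nabla\tilde V^{T}f\le 0$ on $\mathcal D$ the set $\Omega_c$ is forward invariant (a trajectory leaving $\overline{B}_\varepsilon$ would have to reach $|x|=\varepsilon$, where $\tilde V\ge\beta>c$, contradicting monotonicity of $\tilde V$ along the trajectory); continuity of $\tilde V$ at $0$ then yields $\delta>0$ with $B_\delta\subseteq\Omega_c$, so every solution starting in $B_\delta$ exists for all $t\ge0$ and stays in $B_\varepsilon$. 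For convergence, note $t\mapsto\tilde V(x(t))$ is nonincreasing and bounded below by $0$, hence converges to some $\ell\ge0$; if $\ell>0$ the trajectory would remain, for all $t$, in a compact annulus $A=\{x:d\le|x|\le\varepsilon\}$ for suitable $d>0$ (again by continuity of $\tilde V$), on which $-\gamma:=\max_{A}\nabla\tilde V^{T}f<0$ by negative definiteness and compactness, giving $\tilde V(x(t))\le\tilde V(x(0))-\gamma t\to-\infty$, a contradiction. Thus $\ell=0$, and positive definiteness of $\tilde V$ forces $|x(t)|\to0$.

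For the exponential claim, the hypothesis $\nabla\tilde V^{T}f\le-\alpha\tilde V$ on $\mathcal D$ implies that, along any trajectory that stays in $\mathcal D$, the scalar $v(t)=\tilde V(x(t))$ obeys $\dot v\le-\alpha v$; the comparison lemma (equivalently, Gr\"onwall's inequality) then gives $v(t)\le v(0)e^{-\alpha t}$. Combining this with two-sided bounds $c_1|x|^2\le\tilde V(x)\le c_2|x|^2$ on a neighbourhood of the origin contained in $\mathcal D$ --- which hold, in particular, for the polynomial Lyapunov functions used later in this article --- yields $c_1|x(t)|^2\le c_2|x(0)|^2e^{-\alpha t}$, i.e.\ $|x(t)|\le\sqrt{c_2/c_1}\,|x(0)|e^{-(\alpha/2)t}$, which is exponential stability with $c=\sqrt{c_2/c_1}$ and $b=\alpha/2$ on that neighbourhood.

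The step I expect to need the most care is the one that is also delicate in the textbook statement: ensuring the trajectory never leaves $\mathcal D$, so that the differential inequalities for $\tilde V$ remain valid for all $t\ge0$ --- handled above by the invariant-sublevel-set construction --- and, for the exponential part, securing the lower bound $c_1|x|^2\le\tilde V(x)$, which is not automatic for an arbitrary continuously differentiable positive definite $\tilde V$ but is available in the polynomial setting of interest. The remaining arguments are routine.
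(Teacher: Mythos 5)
The paper itself gives no proof of this statement---it is quoted as the classical Lyapunov theorem with a citation to Khalil, Thm.~4.1---and your proposal correctly reproduces that standard argument: forward-invariant sublevel sets inside $\mathcal{D}$ for stability, the compact-annulus contradiction for attractivity, and the comparison/Gr\"onwall step giving $\tilde V(x(t))\le \tilde V(x(0))e^{-\alpha t}$. Your caveat on the exponential part is also the right one: the hypotheses as literally stated only yield exponential decay of $\tilde V$, and exponential decay of $|x(t)|$ additionally requires two-sided comparability bounds of the form $\eta_1|x|^{d}\le \tilde V(x)\le \eta_2|x|^{d}$ (giving rate $\alpha/d$, not necessarily $\alpha/2$), which the paper imposes later in \eqref{E:cond_VLF} rather than in the theorem statement itself.
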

Here $\dot{\tilde{V}}(x)\!=\!\nabla{\tilde{V}}^T \!\!\cdot\! f(x)$. When there exists such a function $\tilde{V}\left(x\right)$, the region of attraction (ROA) of the stable equilibrium point at origin can be (conservatively) estimated as  \cite{Vicino:1985}
\begin{subequations}\label{E:ROA}
\begin{align}
&~~\mathcal{R}:=\left\lbrace x\in\mathcal{D}\left| {V}(x)\leq 1\right.\right\rbrace\,,~\text{with}~{V}(x)= {\tilde{V}(x)}/{\gamma^{max}},\\
&\text{where}~\gamma^{max}:=\max\left\lbrace \gamma\, \left\vert\, \left\lbrace x\in\mathbb{R}^n\left| \tilde{V}(x)\leq\gamma\right.\right\rbrace \subseteq \mathcal{D}\right.\right\rbrace\,,
\end{align}
\end{subequations}
i.e. the boundary of the ROA is estimated by the unit level-set of a suitably scaled LF ${V}(x)$.
Relatively recent studies have explored how sum-of-squares (SOS) based methods can be utilized to find LFs by restricting the search space to SOS polynomials \cite{Wloszek:2003,Parrilo:2000,Tan:2006,Anghel:2013}. 
Let us denote by $\mathbb{R}\left[x\right]$ the ring of all polynomials in $x\in\mathbb{R}^n$. 
\begin{definition}
A multivariate polynomial $p\in\mathbb{R}\left[x\right],~x\in\mathbb{R}^n$, is a sum-of-squares (SOS) if there exist some polynomial functions $h_i(x), i = 1\ldots s$ such that 
$p(x) = \sum_{i=1}^s h_i^2(x)$.
We denote the ring of all SOS polynomials in $x\in\mathbb{R}^n$ by $\Sigma[x]$.
\end{definition} 
Checking if $p\!\in\!\mathbb{R}[x]$ is an SOS is a semi-definite problem which can be solved with a MATLAB$^\text{\textregistered}$ toolbox SOSTOOLS \cite{sostools13,Antonis:2005a} along with a semidefinite programming solver such as SeDuMi \cite{Sturm:1999}. The SOS technique can be used to search for polynomial LFs by translating the conditions in Theorem\,\ref{T:Lyap} to equivalent SOS conditions \cite{sostools13,Wloszek:2003,Wloszek:2005,Antonis:2005,Antonis:2005a, Chesi:2010a }. An important result from algebraic geometry, called Putinar's Positivstellensatz theorem \cite{Putinar:1993,Lasserre:2009}, helps in translating the SOS conditions into SOS feasibility problems. 
The Putinar's Positivestellensatz theorem states  (see \cite{Lasserre:2009}, Ch. 2)
\begin{theorem}\label{T:Putinar}
Let $\mathcal{K}\!\!=\! \left\lbrace x\in\mathbb{R}^n\left\vert\, k_1(x) \geq 0\,, \dots , k_m(x)\geq 0\!\right.\right\rbrace$ be a compact set, where $k_j\!\in\!\mathbb{R}[x]$, $\forall j\in\left\lbrace 1,\dots,m\right\rbrace$. Suppose there exists a $\mu\!\in\! \left\lbrace \sigma_0 + {\sum}_{j=1}^m\sigma_j\,k_j \left\vert\, \sigma_0,\sigma_j \in \Sigma[x]\,,\forall j \right. \right\rbrace$ such that $\left\lbrace \left. x\in\mathbb{R}^n \right\vert\, \mu(x)\geq 0 \right\rbrace$ is compact. Then, if $p(x)\!>\!0~\forall x\!\in\!\!\mathcal{K}$, then $p \!\in\! \left\lbrace \sigma_0 \!\!+\!\! \sum_j\sigma_jk_j\!\!\left\vert\, \sigma_0,\sigma_j\!\!\in\!\Sigma[x],\forall j\!\right.\right\rbrace$.
\end{theorem}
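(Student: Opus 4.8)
The plan is to establish Putinar's representation by the classical functional-analytic route: a Hahn--Banach separation followed by a moment-problem (measure representation) argument. Write $M := \{\sigma_0 + \sum_{j=1}^m \sigma_j k_j \mid \sigma_0,\sigma_j\in\Sigma[x]\}$ for the quadratic module generated by $k_1,\dots,k_m$; it is a convex cone in $\mathbb{R}[x]$, stable under multiplication by squares, and the claim is that $p>0$ on $\mathcal{K}$ forces $p\in M$. The first step is a preliminary reduction: the hypothesis that some $\mu\in M$ has $\{\mu\ge0\}$ compact is upgraded to the \emph{Archimedean} property, namely $N-\sum_i x_i^2\in M$ for some $N>0$. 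This reduction is itself a small Positivstellensatz-type statement (it appears in Putinar's original paper), so I would carry it out, or invoke it, as a standalone lemma. Once it is available, multiplying $N-|x|^2$ by squares and summing gives $N^k-|x|^{2k}\in M$ for all $k$, and more generally that for every $q\in\mathbb{R}[x]$ there is a constant $c_q$ with $c_q\pm q\in M$; in particular $1$ is an algebraic interior point of the cone $M$.

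Now suppose, for contradiction, that $p\notin M$. Since $1\in\mathrm{ai}(M)$ and $p\notin M\supseteq\mathrm{ai}(M)$, the Eidelheit/Hahn--Banach separation theorem yields a nonzero linear functional $L:\mathbb{R}[x]\to\mathbb{R}$ with $L\ge0$ on $M$ and $L(p)\le0$. Because $|x|^2\in M$ and $N\cdot1-|x|^2\in M$, vanishing of $L(1)$ would force $L$ to vanish on all squares, hence on all of $\mathbb{R}[x]$; thus $L(1)>0$, and we normalize $L(1)=1$. The decisive estimate is that $L$ is continuous for the supremum norm on the ball $B:=\{x:|x|^2\le N\}$: from $N^k-|x|^{2k}\in M$ one bounds the moments $L(x_i^{2k})\le N^k$, and then Cauchy--Schwarz applied to the positive semidefinite bilinear form $(q,r)\mapsto L(qr)$, combined with a Markov-type polynomial inequality on $B$, bounds $|L(q)|$ by a constant multiple of $\|q\|_{\infty,B}$.

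By the Stone--Weierstrass density of polynomials in $C(B)$ and the Riesz representation theorem, $L$ then extends to $L(q)=\int_B q\,d\nu$ for a positive Borel measure $\nu$ on $B$ with total mass $\nu(B)=L(1)=1$. It remains to locate the support of $\nu$: for each $j$ and every polynomial $q$ the element $q^2k_j$ lies in $M$, so $\int k_j\,q^2\,d\nu\ge0$; density of polynomials in $L^2(\nu)$ then forces $k_j\ge0$ $\nu$-almost everywhere, so $\nu$ is carried by $\bigcap_j\{k_j\ge0\}=\mathcal{K}$. Finally, $p$ being positive on the compact set $\mathcal{K}$ is bounded below there by some $\delta>0$, whence $L(p)=\int_{\mathcal{K}}p\,d\nu\ge\delta\,\nu(\mathcal{K})=\delta>0$, contradicting $L(p)\le0$. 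Hence $p\in M$.

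I expect the technical heart, and the main obstacle, to be the continuity/boundedness step that turns ``$L\ge0$ on the cone $M$'' into ``$L$ is integration against a measure supported on $B$''; this is precisely where the Archimedean hypothesis is indispensable, and it is what separates Putinar's theorem from the weaker Schm\"udgen form (which drops any Archimedean assumption on $M$ but in exchange must enlarge the module to include all products $\prod_{j\in S}k_j$). A secondary point requiring care is the preliminary reduction from ``$\{\mu\ge0\}$ compact for some $\mu\in M$'' to the genuine Archimedean property, which I would settle first.
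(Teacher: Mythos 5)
The paper does not prove this statement at all: Theorem~\ref{T:Putinar} is quoted as a known result (Putinar's Positivstellensatz, cited to Putinar 1993 and Lasserre, Ch.~2), so your proposal can only be measured against the standard literature proof, whose outline you indeed follow (Archimedean reduction, Eidelheit/Hahn--Banach separation with $1$ as an algebraic interior point, a representing measure, support localization, contradiction). Most of those steps are correct as sketched: $N^k-|x|^{2k}\in M$, boundedness of every polynomial ($c_q\pm q\in M$), $L(1)>0$, the localization $\int k_j q^2\,d\nu\ge 0$ plus $L^2(\nu)$-density giving $\operatorname{supp}\nu\subseteq\mathcal{K}$, and the final estimate $L(p)\ge\delta>0$ are all standard and sound.

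The genuine gap is exactly the step you yourself flag as the technical heart. Bounding the moments by $L(x_i^{2k})\le N^k$ and then invoking Cauchy--Schwarz together with ``a Markov-type polynomial inequality on $B$'' does not give $|L(q)|\le C\,\sup_B|q|$ with $C$ independent of the degree: any coefficient-based estimate only yields $|L(q)|\le\sum_\alpha|c_\alpha|N^{|\alpha|/2}$, and the ratio of this quantity to $\sup_B|q|$ blows up exponentially in $\deg q$ (already on an interval, Chebyshev polynomials have sup norm $1$ but coefficient sums growing like $(1+\sqrt2)^d$); Markov-type inequalities likewise carry degree-dependent constants. So the extension of $L$ to $C(B)$ via Stone--Weierstrass and Riesz is not justified as written, and in several variables positivity on squares plus bounded moments cannot be waved into a representing measure by a Hamburger-type argument. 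This is precisely where the classical proof brings in heavier machinery: the GNS construction on $(q,r)\mapsto L(qr)$, the fact that $(N-x_i^2)q^2\in M$ makes each multiplication operator bounded by $\sqrt N$, and the spectral theorem for commuting bounded self-adjoint operators to produce $\nu$ (alternatively, the Berg--Christensen--Ressel/Berg--Maserick theorem for exponentially bounded positive semidefinite functionals, or the purely algebraic proofs of Jacobi/Schweighofer via P\'olya's theorem, which avoid measures altogether). A secondary caveat: the preliminary reduction from ``some $\mu\in M$ has $\{\mu\ge0\}$ compact'' to $N-|x|^2\in M$ is not a small lemma --- its standard proof applies Schm\"udgen's Positivstellensatz to the compact set $\{\mu\ge0\}$ --- so if you defer it, cite it explicitly as such rather than treating it as routine.
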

\begin{remark}\label{R:Putinar}
Using Theorem\,\ref{T:Putinar}, we can translate the problem of checking that $p\!>\!0$ on $\mathcal{K}$ into an SOS feasibility problem where we seek the SOS polynomials $\sigma_0\,,\,\sigma_j\,\forall j$ such that $p\!-\!\sum_j\sigma_j k_j$ is {SOS.
Note that any} equality constraint $k_i(x)\!=\!0$ can be expressed as two inequalities $k_i(x)\!\geq 0$ and $k_i(x)\!\leq\! 0$. In many cases, especially for the $k_i\,\forall i$ used throughout this work, a $\mu$ satisfying the conditions in Theorem\,\ref{T:Putinar} is guaranteed to exist (see \cite{Lasserre:2009}), and need not be searched for.
\end{remark}

{In} \cite{Conti:1956,Brauer:1961} the authors proposed to view the LF as a dependent variable in a first-order auxiliary differential equation, often termed as the `comparison equation' (or, `comparison system'). 
{It was shown in \cite{Bellman:1962,Bailey:1966} that, under certain conditions,} the comparison equation can be effectively reduced to a set of linear differential equations. 
Noting that all the elements of the { matrix} $e^{At},~ t\geq 0$, where $A=\left[a_{ij}\right]\in\mathbb{R}^{m\times m}$, are non-negative if and only if $a_{ij}\geq 0, i\neq j$, {it was shown in \cite{Beckenbach:1961,Bellman:1962}:
\begin{lemma}\label{L:comparison}
Let $A\!\in\!\mathbb{R}^{m\times m}$ have non-negative off-diagonal elements, $v:[0,\infty)\!\rightarrow\!\mathbb{R}^m$ and $r:[0,\infty)\!\rightarrow\!\mathbb{R}^m$. If $v(0)\!=\!r(0)\,$, $\dot{v}(t)\!\leq\!Av(t)$ and $\dot{r}(t)\!=\!Ar(t)\,,$ then $v(t)\!\leq\! r(t)~\forall t\!\geq\! 0\,$.
\end{lemma}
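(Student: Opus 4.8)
The plan is to prove Lemma~\ref{L:comparison} by a comparison/differential-inequality argument, exploiting the sign structure of $A$ to ensure that the propagator $e^{At}$ is order-preserving. First I would set $w(t) := r(t) - v(t)$ and note that $w(0) = 0$. Differentiating and using the two hypotheses gives $\dot w(t) = \dot r(t) - \dot v(t) = A r(t) - \dot v(t) \geq A r(t) - A v(t) = A w(t)$, so $w$ satisfies the differential inequality $\dot w(t) \geq A w(t)$ with $w(0) = 0$. The goal is then to show that $w(t) \geq 0$ (componentwise) for all $t \geq 0$; together with the definition of $w$ this is exactly the claim $v(t) \leq r(t)$.

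Next I would remove the differential \emph{inequality} by introducing the integrating factor $e^{-At}$. Observe that $\frac{d}{dt}\bigl(e^{-At} w(t)\bigr) = e^{-At}\bigl(\dot w(t) - A w(t)\bigr) \geq e^{-At} \cdot 0$, but this does not immediately conclude because $e^{-At}$ need not have nonnegative entries. The clean route is instead: define $z(t) := e^{-At} w(t)$, so $\dot z(t) = e^{-At}\bigl(\dot w(t) - A w(t)\bigr) =: e^{-At} g(t)$ where $g(t) := \dot w(t) - A w(t) \geq 0$. Integrating from $0$ to $t$ and using $z(0) = w(0) = 0$ yields $w(t) = e^{At} z(t) = e^{At}\int_0^t e^{-As} g(s)\, ds = \int_0^t e^{A(t-s)} g(s)\, ds$. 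Now invoke the Metzler-matrix fact already recalled in the text just before the lemma: since $A$ has nonnegative off-diagonal entries, every entry of $e^{A\tau}$ is nonnegative for all $\tau \geq 0$. Hence for each $s \in [0,t]$ the integrand $e^{A(t-s)} g(s)$ is a nonnegative matrix times a nonnegative vector, so it is $\geq 0$ componentwise; integrating preserves this, giving $w(t) \geq 0$, i.e. $v(t) \leq r(t)$ for all $t \geq 0$.

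A mild technical point to address is the regularity of $v$: the hypothesis only posits $\dot v(t) \leq A v(t)$, so strictly one should assume $v$ is (absolutely) continuous / differentiable a.e. so that $g(t) = \dot w(t) - Aw(t)$ is defined a.e. and the fundamental theorem of calculus applies in integral form; the argument above goes through verbatim with $\dot w$ interpreted in this sense. One could alternatively avoid any integrating-factor manipulation by a standard perturbation trick: for $\varepsilon > 0$ let $r_\varepsilon$ solve $\dot r_\varepsilon = A r_\varepsilon + \varepsilon \mathbf{1}$ with $r_\varepsilon(0) = r(0)$ (where $\mathbf{1}$ is the all-ones vector), show $v(t) < r_\varepsilon(t)$ for all $t$ by a first-crossing argument that uses the off-diagonal sign condition on $A$ at the crossing index, and then let $\varepsilon \to 0^+$; continuous dependence of $r_\varepsilon$ on $\varepsilon$ gives $v(t) \leq r(t)$.

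The main obstacle is not any single computation but getting the use of the sign structure of $A$ exactly right: the inequality $\dot w \geq Aw$ does \emph{not} by itself force $w \geq 0$ for a general matrix $A$ (the off-diagonal nonnegativity is essential, as it is precisely what makes $e^{At}$ entrywise nonnegative and hence order-preserving), so the proof must route the positivity through the representation $w(t) = \int_0^t e^{A(t-s)} g(s)\,ds$ (or through the first-crossing argument) rather than through a naive integrating factor. Everything else — differentiating $w$, applying the hypotheses, invoking the already-stated Metzler/exponential-positivity fact — is routine.
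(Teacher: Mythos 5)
Your proof is correct and follows exactly the route the paper intends: the paper states the entrywise nonnegativity of $e^{At}$ for matrices with nonnegative off-diagonal entries immediately before the lemma and then cites Beckenbach--Bellman rather than proving it, and your variation-of-constants representation $w(t)=\int_0^t e^{A(t-s)}g(s)\,ds$ with $g=\dot w-Aw\ge 0$ is precisely the standard argument that fact is meant to support. Nothing is missing; the regularity caveat and the alternative first-crossing argument you mention are fine but not needed beyond what you already wrote.
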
}
We henceforth refer to Lemma\,\ref{L:comparison} as the `comparison principle' and the {differential inequalities of the form $\dot{v}(t)\!\leq\!Av(t)$}
as a `comparison system' (CS). 

\section{Problem Description}\label{S:problem}

{Let us consider a network of $m$ (locally) asymptotically stable, polynomial\footnote{{We consider the cases when a non-polynomial dynamics can be recasted into an equivalent (exact) polynomial form, with additional variables and constraints \cite{Antonis:2005,Anghel:2013}. Otherwise, approximate polynomial forms (e.g. Taylor expansion) can be used \cite{Chesi:2009}.}} subsystems represented as follows,  
\begin{subequations}\label{E:fi}
\begin{align}
\forall i\!\in\!\lbrace 1,\dots,m\rbrace:~&\dot{x}_i = f_i(x_i) + g_i(x), ~ x_i\in\mathbb{R}^{n_i}, ~x\in\mathbb{R}^n\\
& g_i(x) \!=\! {\sum}_{j\neq i} g_{ij}(x_i, x_j)\,, \label{E:gij}\\
&f_i({0})\!=\!{0}\,,~g_{ij}(x_i,0)\!=\!0~\forall j\!\neq\! i\,,
\end{align}
\end{subequations}
where $x_i$ are the state variables that belong to the $i^{{th}}$ subsystem, $f_i\in\mathbb{R}[x_i]^{n_i}$ denotes the isolated subsystem dynamics, $g_i\in\mathbb{R}[x]^{n_i}$ represents the total neighbor interactions and $g_{ij}\in\mathbb{R}[x_i,x_j]^{n_i}$ quantifies how the $i^{th}$ subsystem affects the dynamics of the $j^{th}$ subsystem.} 
Note that we allow overlapping decompositions in which subsystems may share common state variable(s) \cite{Siljak:1978,Jocic:1977}, i.e. $n\!\leq\!{\sum}_{j=1}^m n_j\,$.
Finally, let
\begin{subequations}\label{E:Ni}
\begin{align}
\mathcal{N}_i &:= \left\lbrace i\right\rbrace\bigcup\left\lbrace j\left\vert ~\exists \, \left\lbrace x_i,x_j\right\rbrace, ~\text{s.t.}~g_{ij}\left(x_i,x_j\right)\neq 0 \right.\right\rbrace, \\
\text{and }\,\bar{x_{i}} &:= {\bigcup}_{j\in\mathcal{N}_i}\,\left\lbrace x_j\right\rbrace\,,
\end{align}\end{subequations}
denote {the set of the subsystems in the neighborhood of the $i^{th}$ subsystem (including itself)} and the state variables that belong to this neighborhood, respectively.

{The polynomial LFs, $V_i \in \mathbb{R}\left[x_i\right]~\forall i\,$, for the isolated subsystems, $\dot{x}_i = f_i(x_i)\,,\,\forall i\,$, are computed using an SOS-based \textit{expanding interior algorithm} \cite{Wloszek:2003,Anghel:2013} (alternatively, the methods in \cite{Tibken:2000, Chesi:2011} can be used), with the isolated ROAs
 \begin{align}
\mathcal{R}_i^0:= \left\lbrace x_i\in\mathbb{R}^{n_i}\left| V_i(x_i)\leq 1\right.\right\rbrace,~\forall i\!\in\!\lbrace 1,2,\dots,m\rbrace\,.
 \end{align}
The $V_i\,\forall i$ satisfy, for some ${\eta}_{i1},{\eta}_{i2},{\eta}_{i3}\!>\!0~\forall i$ and $\mathcal{D}_i\!\subset\!\mathcal{R}_i^0\,\forall i\,$,
\begin{subequations}\label{E:cond_VLF}
\begin{align}
\forall i:~\forall x_i\in\mathcal{D}_i\subset\mathcal{R}_i^0, ~&{\eta}_{i1}\left\vert x_i\right\vert^{d_i} \leq V_i(x_i) \leq {\eta}_{i2}\left\vert x_i\right\vert^{d_i} \,,\label{E:cond_VLF_1}\\
\text{and}~~&\nabla{V}_i^T\!\!f_i\, \leq -{\eta}_{i3}\left\vert x_i\right\vert^{d_i}\,, \label{E:cond_VLF_2}
\end{align}
\end{subequations}
w}here, $d_i$ is an even positive integer denoting the lowest degree among all monomials in $V_i(x_i)$. Further, the interaction terms, $g_{ij}\in\mathbb{R}[x_i,x_j]~\forall i,\forall j\in\mathcal{N}_i\backslash \lbrace i\rbrace$, satisfy the following,
\begin{align}\label{E:cond_inter}
\forall i\in&\left\lbrace 1,\dots,m\right\rbrace, \,\forall j\!\in\!\mathcal{N}_i\backslash\left\lbrace i\right\rbrace, ~\exists {\zeta}_{ij}>0\,~\text{such that,}\nonumber \\
&\forall x_i\!\in\!\mathcal{D}_i, \,\forall x_j\!\in\!\mathcal{D}_j, ~\left\vert \nabla{V}_i^T\!\!g_{ij}\right\vert \leq {\zeta}_{ij}\left\vert x_i\right\vert^{d_i-1}\left\vert x_j\right\vert.
\end{align}
{ We will address two stability problems in this paper. First, assume that a disturbance is applied to the link
between subsystems $i$ and $j$ (a fault in power systems). This means that $g_{ij}(x_i,x_j)$ in \eqref{E:gij} changes, or even becomes 0 if the link is cut, and the system moves away from its equilibrium point. 
After the fault is cleared, we reset the clock to 0, and consider the  evolution of system \eqref{E:fi} from the state $x(0) \neq 0$.}
Thus, any disturbance moves the system away from the equilibrium and results in positive level-sets $V_i(x_i(0))=v_i^0\in\left(0,1\right]$ for some or all of the subsystems. { A stability problem can be then formulated as checking if $\lim_{t\rightarrow +\infty}{V}_i(x_i(t))=0\,\forall i$ whenever $V_i(x_i(0))\!=\!v_i^0\,\forall i\,$,
where $x_i(t),~t>0$, are solutions of the coupled dynamics in \eqref{E:fi}. 
%
An attractive and scalable approach to solving this problem} is to construct a vector LF $V:\mathbb{R}^n\rightarrow\mathbb{R}_{\geq 0}^m$\,, defined as:
\begin{align}\label{E:vecLyap}
V(x) &:= \left[V_1(x_1)  ~~ V_2(x_2) ~~\dots ~~V_m(x_m)\right]\,^T , 
\end{align} 
and use a `comparison system' to certify that {$\lim_{t\rightarrow+\infty}V(x(t))=0\,.$}
Restricting our focus to the linear comparison principle (Lemma~\ref{L:comparison}), the aim is to seek an $A=[a_{ij}]\in\mathbb{R}^{m\times m}$ and a domain {$\mathcal{R}\!\subseteq\!\left\lbrace x\!\in\!\mathbb{R}^n\left\vert\,x_i\!\in\!\mathcal{R}_i^0~\forall i\right.\right\rbrace$, with $0\!\in\!\mathcal{R}$\,, such that
\begin{subequations}\label{E:comparison}
\begin{align}
\dot{V}(x)&\leq~ AV(x),~\forall x\in\mathcal{R}, \label{E:comparison_VAV}\\
\text{where,}\quad & \text{$A=[a_{ij}]$ is Hurwitz, }a_{ij}>0~\forall i\neq j\, , \\
\text{and}\quad & \text{$\mathcal{R}$ is invariant under the dynamics \eqref{E:f}.}
\end{align}
\end{subequations}
Henceforth, we refer to a comparison system of the form \eqref{E:comparison_VAV} as a \textit{`single comparison system'}, since one matrix $A=[a_{ij}]$ satisfies the differential inequalities in the full domain $\mathcal{R}$ which includes the origin.
%
When \eqref{E:comparison} holds\footnote{$A\!=\![a_{ij}]$ is called Hurwitz if its eigenvalues have negative real parts. $\mathcal{R}$ is called invariant if $x(0)\!\in\!\mathcal{R}\implies x(t)\!\in\!\mathcal{R}~\forall t\!>\!0$.}, any $x(0)\!\in\!\mathcal{R}$ would imply exponential convergence of $V(x(t))$ to the origin (from Lemma~\ref{L:comparison}), which, via \eqref{E:cond_VLF_1}, also implies exponential convergence of the states \cite{Siljak:1972}.
}

{ A second stability problem is to seek an optimal estimate of the region of attraction (ROA) of the stable equilibrium point by maximizing the domain $\mathcal{R}$ in \eqref{E:comparison}. While such optimization problems are difficult, and are not the main scope of this paper, we will describe in Section \ref{S:results} the results of  
estimating the ROA for a particular optimization direction.}

\section{Single Comparison System}
\label{S:comparison}
In this section, we first review the traditional approach towards stability analysis of interconnected dynamical systems using a {\textit{single comparison system (or, single CS)}, and then present an SOS-based approach that circumvents some of the issues with applicability of the traditional approach}. 

\subsection{Traditional Approach}\label{S:stab_old}
In \cite{Siljak:1972,Weissenberger:1973,Araki:1978,Jocic:1978}, and related works, authors laid out a formulation of the linear CS using certain conditions on the LFs and the neighbor interactions. It was observed that if there exists a set of LFs, $v_i:\mathbb{R}^{n_i}\rightarrow\mathbb{R}_{\geq 0}\,,~\forall\,i=1,2,\dots,m,$ satisfying the following conditions
{
\begin{subequations}\label{E:cond_Weissenberger}
\begin{align}
\forall (i,j)\!:\quad\tilde{\eta}_{i1}\left\vert x_i\right\vert  \leq v_i(x_i) \leq \tilde{\eta}_{i2}\left\vert x_i\right\vert\,,~\forall x_i\in\tilde{\mathcal{D}}_i \!\!\subset\!\mathcal{R}_i^0,  &\label{E:cond_VLF_old_1}\\
\left(\nabla{v}_i\right)^T\!\!f_i\, \leq -\tilde{\eta}_{i3}\left\vert x_i\right\vert\,,~\forall x_i\in\tilde{\mathcal{D}}_i \!\!\subset\!\mathcal{R}_i^0,  &\label{E:cond_VLF_old_2}\\
\text{and}~\left\vert \left(\nabla{v}_i\right)^T\!\!g_{ij}\right\vert  \leq \tilde{\zeta}_{ij}\left\vert x_j\right\vert ,~\forall x_i\!\in\!\tilde{\mathcal{D}}_i, \,\forall x_j\!\in\!\tilde{\mathcal{D}}_j, &\label{E:cond_inter_old}
\end{align}\end{subequations}
for some $\tilde{\eta}_{i1},\tilde{\eta}_{i2},\tilde{\eta}_{i3}\!>\!0~\forall i$ and $\tilde{\zeta}_{ij}\!\geq\!0~\forall (i,j)$, with $\tilde{\zeta}_{ij}\!=\!0~\forall j\!\notin\!\mathcal{N}_i$, then the corresponding vector LF $v\!=\![v_1~v_2\dots v_m]^T$ satisfies a CS on the domain $\tilde{\mathcal{D}}\!=\!\left\lbrace x\!\in\!\mathbb{R}^n\!\left|\, x_i\in\tilde{\mathcal{D}}_i\,\forall i\! \right.\right\rbrace$, with a comparison matrix $\tilde{A}\!=\![\tilde{a}_{ij}]$ given by
\begin{align}\label{E:cond_A}
\forall (i,j)\!:~\,\tilde{a}_{ii} \!=\! -{\tilde{\eta}_{i3}} /{\tilde{\eta}_{i2}}\,,~ \tilde{a}_{ij}\!=\!{\tilde{\zeta}_{ij}} /{\tilde{\eta}_{j1}}\,.
\end{align}
If $\tilde{A}$ is Hurwitz, then any invariant domain $\mathcal{R}\!\subseteq\!\tilde{\mathcal{D}}$ is an estimate of a region of exponential stability \cite{Weissenberger:1973,Jocic:1978}.} 

While the traditional approach provides very useful analytical insights into the construction of the comparison matrix $\tilde{A}=[\tilde{a}_{ij}]$, it unfortunately suffers from certain limitations, primarily due to the unavailability of suitable computational methods at that time. For example, the traditional approach requires finding the bounds in \eqref{E:cond_Weissenberger}, and also the LFs $v_i,\,\forall\,i,$ that satisfy those bounds. From the LFs $V_i\,\forall\,i$ satisfying \eqref{E:cond_VLF}, we can construct (non-polynomial) LFs \cite{Weissenberger:1973,Jocic:1978}, as:
\begin{subequations}\label{E:LF_Weiss}
\begin{align}
\!\!\!\!\!\!\forall (i,j):~v_i\!=\!\sqrt[d_i]{{V_i}}\text{ satisfies \eqref{E:cond_Weissenberger} with }&\!\!\!\!\\
\!\!\!\!\!\!\tilde{\eta}_{i1} \!=\! \sqrt[d_i]{{\eta}_{i1}} \,,~ \tilde{\eta}_{i2} \!=\! \sqrt[d_i]{{\eta}_{i2}} \, ,~\tilde{\eta}_{i3} \!=\! \frac{{\eta}_{i3}\tilde{\eta}_{i2}}{d_i\,\eta_{i2}} \,, ~\tilde{\zeta}_{ij} \!=\! \frac{{\zeta_{ij}}\tilde{\eta}_{i1}}{d_i\,\eta_{i1}}\,.\!\!\!\!
\end{align}
\end{subequations}
Thus the computation of each element of the comparison matrix $\tilde{A}$ in \eqref{E:cond_A} requires multiple optimization steps. It may also be noted that some of the bounds in \eqref{E:cond_Weissenberger}, while convenient for analytical insights, need not be optimal for computing a Hurwitz comparison matrix. For example, in \eqref{E:cond_inter_old}, $\left\vert\nabla v_i^T g_{ij}\right\vert$ is function of both $x_i$ and $x_j$ but is bounded by using only the norm on $x_j$.  


\subsection{SOS-Based Direct Computation}\label{S:stab_new}

{SOS-based techniques can be used to resolve some of the issues that arise with the traditional approach \cite{Kundu:2015CDC}}. The idea is to compute the \textit{single CS} in a decentralized way, using directly the LFs $V_i\in\mathbb{R}[x_i]$ \cite{Wloszek:2003,Anghel:2013}, which however do not satisfy the conditions in \eqref{E:cond_Weissenberger}. {For convenience, let us introduce, for all $i\in\lbrace 1,\dots,m\rbrace$, the following notations, 
\begin{subequations}\label{E:Di}\begin{align}
\!\!\!\!\!\!\forall\, 0\!\leq\!a_2\!<\!a_1\!\leq\!1\!:\,\mathcal{D}_i[a_1]\!:=\!\left\lbrace x\!\in\!\mathbb{R}^n\left\vert\, V_i(x_i)\!\leq\! a_1\!\!\right.\right\rbrace\!,\!\!&\!\!\!\\
\!\!\!\!\!\!\mathcal{D}_i^b[a_1]\!:=\!\left\lbrace x\!\in\!\mathbb{R}^n\left\vert\, V_i(x_i)\!=\! a_1\!\!\right.\right\rbrace\!,\!\!&\!\!\!\\
\!\!\!\!\!\!\text{and}~\mathcal{D}_i[a_1,a_2]\!:=\!\left\lbrace x\!\in\!\mathbb{R}^n\left\vert\, a_2\!<\! V_i(x_i)\!\leq\! a_1\!\!\right.\right\rbrace\!.\!\!&\!\!\!
\end{align}\end{subequations}
Given a set of $\gamma_i^0\!\in\!(0,1]\,\forall i\,$, we want to construct the single CS 
in a distributed way by calculating each row of the comparison matrix $A\!\in\!\mathbb{R}^{m\times m}\!$ (with non-negative off-diagonal entries) locally at each subsystem level, such that,
\begin{align}\label{E:Vaij}
\forall i\!\in\!\lbrace 1,\dots,m\rbrace:~\,\dot{V}_i\leq\! {\sum}_{j\in\mathcal{N}_i} a_{ij}V_j\, ~\text{on}~{\bigcap}_{j\in\mathcal{N}_i}\!\mathcal{D}_j[\gamma_j^0]\,,
\end{align}
}
\begin{proposition}\label{P:Gershgorin}
The domain ${\bigcap}_{i=1}^m\!\mathcal{D}_i[\gamma_i^0]$ is an estimate of the ROA of the interconnected system in \eqref{E:fi} if for each $i\in\lbrace 1,2,\dots,m\rbrace$, ${\sum}_{j\in\mathcal{N}_i}a_{ij}<0$ and ${\sum}_{j\in\mathcal{N}_i}a_{ij}\,\gamma_j^0\!<\!0\,$.
\end{proposition}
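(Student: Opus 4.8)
The plan is to show that the hypotheses let us build a genuine single comparison system (in the sense of \eqref{E:comparison}) on the candidate domain $\mathcal{R}:={\bigcap}_{i=1}^m\mathcal{D}_i[\gamma_i^0]$, and then invoke the comparison principle (Lemma~\ref{L:comparison}) together with the Lyapunov bounds \eqref{E:cond_VLF_1} to conclude that $\mathcal{R}$ is an ROA estimate. First I would collect the local inequalities \eqref{E:Vaij}: on ${\bigcap}_{j\in\mathcal{N}_i}\mathcal{D}_j[\gamma_j^0]\supseteq\mathcal{R}$ we have $\dot{V}_i\leq{\sum}_{j\in\mathcal{N}_i}a_{ij}V_j$ for each $i$. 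Setting $a_{ij}:=0$ for $j\notin\mathcal{N}_i$ and assembling the matrix $A=[a_{ij}]\in\mathbb{R}^{m\times m}$, this reads componentwise as $\dot{V}(x)\leq AV(x)$ for all $x\in\mathcal{R}$, and by construction $A$ has non-negative off-diagonal entries, so Lemma~\ref{L:comparison} applies once we know trajectories stay in $\mathcal{R}$.

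The two remaining ingredients are (i) that $A$ is Hurwitz and (ii) that $\mathcal{R}$ is invariant under \eqref{E:fi}. For (i), the condition ${\sum}_{j\in\mathcal{N}_i}a_{ij}<0$ for every $i$ says exactly that $A$ is strictly diagonally dominant by rows with negative diagonal — i.e. $a_{ii}<0$ and $|a_{ii}|>{\sum}_{j\neq i}|a_{ij}|={\sum}_{j\in\mathcal{N}_i\setminus\{i\}}a_{ij}$ (using $a_{ij}\geq 0$ off-diagonal). By the Gershgorin disc theorem (whence the proposition's name) every eigenvalue of $A$ lies in ${\bigcup}_i\{z:|z-a_{ii}|\leq{\sum}_{j\neq i}|a_{ij}|\}$, and each such disc is contained in the open left half-plane, so $A$ is Hurwitz. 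For (ii), I would use the second condition ${\sum}_{j\in\mathcal{N}_i}a_{ij}\gamma_j^0<0$: consider a trajectory with $x(0)\in\mathcal{R}$ and suppose it first touches $\partial\mathcal{R}$ at some time $t^\star$, meaning $V_k(x_k(t^\star))=\gamma_k^0$ for some $k$ while $V_j(x_j(t^\star))\leq\gamma_j^0$ for all $j$. At that instant, $x(t^\star)$ still lies in ${\bigcap}_{j\in\mathcal{N}_k}\mathcal{D}_j[\gamma_j^0]$, so \eqref{E:Vaij} gives $\dot{V}_k(x_k(t^\star))\leq{\sum}_{j\in\mathcal{N}_k}a_{kj}V_j(x_j(t^\star))$. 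Bounding each term — $a_{kj}\geq 0$ and $V_j\leq\gamma_j^0$ for $j\neq k$, while the diagonal term equals $a_{kk}\gamma_k^0$ exactly — yields $\dot{V}_k(x_k(t^\star))\leq{\sum}_{j\in\mathcal{N}_k}a_{kj}\gamma_j^0<0$, so $V_k$ is strictly decreasing at $t^\star$ and the trajectory cannot cross that face of $\partial\mathcal{R}$; hence $\mathcal{R}$ is invariant.

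With (i) and (ii) in hand, the conditions \eqref{E:comparison} are verified for this $A$ and $\mathcal{R}$, so for any $x(0)\in\mathcal{R}$ the comparison principle gives $V(x(t))\leq r(t)$ where $\dot r=Ar$, $r(0)=V(x(0))$; since $A$ is Hurwitz (and $e^{At}$ is entrywise non-negative so $r(t)\geq 0$), $r(t)\to 0$, hence $V_i(x_i(t))\to 0$ for each $i$, and finally \eqref{E:cond_VLF_1} forces $|x_i(t)|\to 0$. Thus every trajectory starting in $\mathcal{R}$ converges to the origin, i.e. $\mathcal{R}={\bigcap}_{i=1}^m\mathcal{D}_i[\gamma_i^0]$ is an ROA estimate. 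The main obstacle I anticipate is the invariance argument: one must be slightly careful that the boundary-crossing analysis is valid when several $V_k$ hit their thresholds simultaneously (apply the argument to each active $k$) and that the differential inequality \eqref{E:Vaij} indeed holds on a neighborhood of the touching point within $\mathcal{R}$, not merely on its interior — this is why it matters that \eqref{E:Vaij} is posed on the closed sets $\mathcal{D}_j[\gamma_j^0]$. Everything else (Gershgorin, the comparison principle, the norm bounds) is routine given the results already stated.
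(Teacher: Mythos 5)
Your proposal is correct and follows essentially the same route as the paper's proof: Gershgorin's circle theorem (with the non-negative off-diagonal entries and negative row sums) to show $A$ is Hurwitz, the second condition ${\sum}_{j\in\mathcal{N}_k}a_{kj}\gamma_j^0<0$ to show $\dot{V}_k<0$ on the active boundary faces and hence invariance of ${\bigcap}_{i}\mathcal{D}_i[\gamma_i^0]$, and then Lemma~\ref{L:comparison} with \eqref{E:cond_VLF_1} to conclude convergence. Your version merely spells out the final comparison-principle step and the simultaneous-boundary-touching case, which the paper leaves implicit (having already noted the convergence implication when introducing \eqref{E:comparison}).
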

\begin{proof}
{Because of the non-negative off-diagonal entries and ${\sum}_{j\in\mathcal{N}_i}a_{ij}<0~\forall i\,$, the application of Gershgorin's Circle theorem \cite{Bell:1965,Gershgorin:1931} states that the comparison matrix $A\!=\![a_{ij}]$ is Hurwitz. 
Further, note that whenever $V_i(x_i(\tau))\!=\!\gamma_i^0$, for some $i\,$, and $V_k(x_k(\tau))\!\leq\!\gamma_k^0~\forall k\!\neq\!i\,$, for some $\tau\!\geq\! 0$, we have
$\left.\dot{V}_i\left(x_i\right)\right\vert_{t=\tau} \!<\!0\,.$
i.e. the (piecewise continuous) trajectories can never cross the boundaries defined by ${\bigcap}_{i=1}^m\!\mathcal{D}_i^b[\gamma_i^0]$. }
\hfill\hfill\qed
\end{proof}
\begin{remark}\label{R:invariance_Hurwitz}
Henceforth, we will loosely refer to the conditions of the form ${\sum}_{j\in\mathcal{N}_i}a_{ij}<0$ as the `Hurwitz conditions', while the conditions of the form ${\sum}_{j\in\mathcal{N}_i}a_{ij}\,\gamma_j^0\!<\!0\,$ will be referred to as the `invariance conditions'.
\end{remark}
{Proposition\,\ref{P:Gershgorin} helps us formulate \textit{local} (subsystem-level) SOS problems to find the single CS and check if the domain ${\bigcap}_{i=1}^m\!\mathcal{D}_i[\gamma_i^0]$ is an estimated ROA.} Note that in this formulation we use the polynomial LFs that do not satisfy the bounds in \eqref{E:cond_Weissenberger}. However, we prefer to directly use the polynomial LFs, instead of converting them to their non-polynomial counterparts as in \eqref{E:LF_Weiss}, for two reasons: 1) convenience of applying SOS methods, and 2) better stability certificates, as shown in the following result. 
{\begin{proposition}\label{L:equivalence}
If for some LFs $v_i\,,\,i\!\in\!\!\left\lbrace 1,\!\dots\!,m\right\rbrace$, there exists a comparison matrix $\tilde{A}\!=\![\tilde{a}_{ij}]$, with $\,\tilde{a}_{ii} \!+\! \sum_{j\neq i} \tilde{a}_{ij}\,\tilde{c}_{ij}\!<\!0~\forall i\!\in\!\lbrace 1,\dots,m\rbrace\,$, for some $\tilde{c}_{ij}\!>\!0\,\,\forall i\!\neq\! j$, then, for any LFs $V_i=\left(v_i\right)^{d}\,\,\forall\,i\,$, $d\!\geq\!1\,,$ the existence of a comparison matrix $A\!=\![a_{ij}]$ is guaranteed, with $\,{a}_{ii} \!+\! \sum_{j\neq i} {a}_{ij}\,\left(\tilde{c}_{ij}\right)^d\!<\!0~\forall i\,$.
\end{proposition}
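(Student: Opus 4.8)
The plan is to transfer the comparison inequality from the $v_i$ to the powers $V_i=(v_i)^d$ by differentiating, and then re-absorb the cross terms that this produces back into $V_i$ and $V_j$ through a Young-type inequality whose free parameter is tuned so that the resulting row combinations telescope.

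First I would unwind the hypothesis: the existence of the comparison matrix $\tilde A=[\tilde a_{ij}]$ means, in the sense of \eqref{E:comparison_VAV}, that on the relevant domain $\dot v_i\le \tilde a_{ii}v_i+\sum_{j\ne i}\tilde a_{ij}v_j$ with $\tilde a_{ij}\ge 0$ for $j\ne i$. Since each LF $v_i$ is nonnegative, the multiplier $d\,v_i^{d-1}$ is nonnegative, so the chain rule preserves the direction of the inequality:
\[
\dot V_i=d\,v_i^{d-1}\dot v_i\;\le\;d\,\tilde a_{ii}V_i+d\sum_{j\ne i}\tilde a_{ij}\,v_i^{d-1}v_j .
\]
The core step is to bound each cross term $v_i^{d-1}v_j$. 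Young's inequality with conjugate exponents $p=d/(d-1)$, $q=d$ and a scaling parameter $\epsilon_{ij}>0$ gives $v_i^{d-1}v_j\le\frac{d-1}{d}\epsilon_{ij}^{d/(d-1)}V_i+\frac{1}{d}\epsilon_{ij}^{-d}V_j$, and the choice $\epsilon_{ij}=\tilde c_{ij}^{(d-1)/d}$ collapses this to
\[
v_i^{d-1}v_j\;\le\;\tfrac{d-1}{d}\,\tilde c_{ij}\,V_i+\tfrac{1}{d}\,\tilde c_{ij}^{\,1-d}\,V_j .
\]
Substituting (legitimate because $\tilde a_{ij}\ge 0$) and collecting coefficients produces a candidate $A=[a_{ij}]$ with $a_{ij}=\tilde a_{ij}\,\tilde c_{ij}^{\,1-d}\ge 0$ for $j\ne i$ and $a_{ii}=d\,\tilde a_{ii}+(d-1)\sum_{j\ne i}\tilde a_{ij}\tilde c_{ij}$, satisfying $\dot V_i\le\sum_j a_{ij}V_j$ on the same region — rewritten via the identity $\{V_j\le a\}=\{v_j\le a^{1/d}\}$, so the two domains coincide up to reparametrization. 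Hence $A$ has nonnegative off-diagonal entries and is a genuine comparison matrix for the vector LF built from the $V_i$.

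It then remains to verify the asserted sign condition, which is exactly where the tuned choice of $\epsilon_{ij}$ pays off:
\[
a_{ii}+\sum_{j\ne i}a_{ij}\,\tilde c_{ij}^{\,d}=d\,\tilde a_{ii}+(d-1)\sum_{j\ne i}\tilde a_{ij}\tilde c_{ij}+\sum_{j\ne i}\tilde a_{ij}\tilde c_{ij}=d\Big(\tilde a_{ii}+\sum_{j\ne i}\tilde a_{ij}\tilde c_{ij}\Big)<0
\]
for every $i$, by hypothesis; the degenerate case $d=1$ is immediate with $A=\tilde A$. I expect the only genuine subtlety to be this second step — recognizing that optimizing the scaled Young inequality over $\epsilon_{ij}$ yields precisely the factor $d$ that makes the row combination telescope back to $d\,(\tilde a_{ii}+\sum_{j\ne i}\tilde a_{ij}\tilde c_{ij})$ — together with the routine but necessary bookkeeping: keeping the inequality direction correct (nonnegativity of $v_i$ and of the off-diagonal $\tilde a_{ij}$) and matching the domains of validity after the substitution $V_i=(v_i)^d$.
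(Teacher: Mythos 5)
Your proposal is correct and follows essentially the same route as the paper: differentiate $V_i=(v_i)^d$, bound the cross terms $v_i^{d-1}v_j$ via Young's inequality with exponents $d/(d-1)$ and $d$ (your $\epsilon_{ij}$-weighted form with $\epsilon_{ij}=\tilde c_{ij}^{(d-1)/d}$ is exactly the paper's factorization $\tilde c_{ij}V_i^{(d-1)/d}(\tilde c_{ij}^{-d}V_j)^{1/d}$), and take $a_{ii}=d\tilde a_{ii}+(d-1)\sum_{j\neq i}\tilde a_{ij}\tilde c_{ij}$, $a_{ij}=\tilde a_{ij}\tilde c_{ij}^{1-d}$. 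Your explicit verification that $a_{ii}+\sum_{j\neq i}a_{ij}\tilde c_{ij}^{d}=d\bigl(\tilde a_{ii}+\sum_{j\neq i}\tilde a_{ij}\tilde c_{ij}\bigr)<0$ is a welcome detail that the paper leaves implicit.
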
}
\begin{proof}
Note from Proposition\,\ref{P:Gershgorin}, that choosing $\tilde{c}_{ij}\!=\!1,$ $\gamma_j^0/\gamma_i^0,$ or $\max(\gamma_j^0/\gamma_i^0,\,1)$ we may retrieve the Hurwitz condition, the invariance condition, or simultaneously both, respectively.
{The proof follows directly after we show that $\forall i\!\in\!\lbrace 1,\dots,m\rbrace$,
\begin{align*}
\dot{V}_i &\leq d v_i^{d-1}\sum_{j=1}^m \tilde{a}_{ij}v_j = d\tilde{a}_{ii}V_i + d\sum_{j\neq i}\tilde{a}_{ij}\tilde{c}_{ij}V_i^{\frac{d-1}{d}}{\left(\tilde{c}_{ij}^{-d}V_j\right)^{1/d}}\notag\\
&\leq d\tilde{a}_{ii}V_i + {\sum}_{j\neq i}\tilde{a}_{ij}\tilde{c}_{ij}\left(\left(d\!-\!1\right)V_i + \tilde{c}_{ij}^{-d}V_j\right) = {\sum}_{j}a_{ij}V_j\,,
\end{align*}
by using Young's inequality\footnote{$a^{1/p} b ^{1/q} \leq a/p + b/q$ for $a,b >0, p > 1$ and $1/p+1/q = 1$.}\!\!, and choosing $a_{ii}\!=\!d\tilde{a}_{ii}\!+\!(d\!-\!1)\sum_{j\neq i}\tilde{a}_{ij}\tilde{c}_{ij}\,,\forall i\,,$ and $a_{ij}\!=\!\tilde{a}_{ij}\left(\tilde{c}_{ij}\right)^{1-d}~\forall i\!\neq\! j$\,.}\hfill\hfill\qed
\end{proof}
{Motivated by Propositions\,\ref{P:Gershgorin}-\ref{L:equivalence}\,, we propose an SOS-based direct computation of the 
comparison matrix $A\!=\![a_{ij}]$ in \eqref{E:Vaij}, in which each subsystem calculates the corresponding row entries of the matrix $A$, by solving the following SOS feasibility problem (using Theorem\,\ref{T:Putinar}):}
\begin{subequations}\label{E:sos_A}
\begin{align}
-\!\nabla V_i^T\!\!\left(f_i\!+\! g_i\right) + \!\!\!{\sum}_{j\in\mathcal{N}_i} \!\!\left(a_{ij}V_j \!- \sigma_{ij}\!\left(\gamma_j^0\!-\!V_j \right)\!\right) \!\in \Sigma[\bar{x}_i], & \label{E:Vaij_SOS}\\
 -{\sum}_{j\in\mathcal{N}_i}\,a_{ij}\in\Sigma[0]\,, & \label{E:TwoCond_Hurwitz}\\
 ~\text{and}~ \,-{\sum}_{j\!\in\!\mathcal{N}_i}\,a_{ij}\,\gamma_j^0\in\Sigma[0]\,, & \label{E:TwoCond_Invariance}\\
\text{where,}~\sigma_{ij}\!\in\!\Sigma[\bar{x}_i]\,\forall \!j\!\!\in\!\!\mathcal{N}_i\,,~a_{ii}\!\in\!\mathbb{R}[0]\,, &\\
\text{and}~\,a_{ij}\!\in\!\Sigma[0]\,\forall\! j\!\!\in\!\!\mathcal{N}_i\backslash\!\lbrace i\rbrace\,.&
\end{align}\end{subequations}
Here $\mathbb{R}[0]$ denotes scalars, $\Sigma[0]$ denotes non-negative scalars and $\bar{x}_i$ were defined in \eqref{E:Ni}. If \eqref{E:sos_A} is feasible for each $i\!\in\!\lbrace 1,\dots,m\rbrace$, then the origin is exponential stable and the domain $\bigcap_{i=1}^m\mathcal{D}_i[\gamma_i^0]$ is an estimated ROA.

\begin{remark}
Alternative, and possibly less conservative, formulations of \eqref{E:sos_A} are possible by replacing some of the constraints by an equivalent objective, e.g. replacing \eqref{E:TwoCond_Hurwitz} by an objective function $\min\,\sum_{j\in\mathcal{N}_i}a_{ij}\,$. But those will require centralized computations, e.g. finding eigenvalues of $A$, and are therefore omitted from further discussion.
\end{remark}

\subsection{Limitations}
{The direct computational approach using the \textit{single CS}, proposed in \cite{Kundu:2015CDC}, has certain limitations, both conceptual as well as computational, which may affect its applicability.} First of all, finding a single set of scalars $a_{ij}\,\forall i,\,\forall j\!\in\!\mathcal{N}_i\,,$ satisfying {the inequalities in a large domain} $\bigcap_{i=1}^m\mathcal{D}_i[\gamma_i^0]$ could be difficult. Note that the success of the CS approach relies on the values of the following:
\begin{align}\label{E:self_decay}
\alpha_i(\gamma_i^0)\!=\!\max\left\lbrace \alpha\!\geq\! 0\left\vert \,\nabla V_i^T\!\!f_i(x_i)\!\leq\!\!-\alpha V_i~\text{on}~\mathcal{D}_i^b[\gamma_i^0]\right.\!\!\right\rbrace ,
\end{align}
which we refer to as the `self-decay rates' of the isolated subsystem LFs. The function $\alpha_i:(0,1]\rightarrow \mathbb{R}_{\geq 0}$, with $\alpha_i(1)\!=\!0\,$,\footnote{Under fairly generic assumptions~\cite{Wu:1988}, for each $i$, $f_i(x_i)\!=\!0$ for at least some $x_i$ on the \textit{true}  boundary of the isolated ROA, while  $\nabla{V}_i(x_i)\!=\!0$ for the computed LF $V_i$ for at least some critical points
$x_i\!\in\!\mathcal{D}_i^b[1]$, which is the \textit{estimated} boundary of the isolated ROA.} is generally non-monotonic (as illustrated in the example in Section\,\ref{S:results}). Consequently, if the neighboring subsystems are at large level-sets, then a subsystem may not find suitable row-entries of the comparison matrix that satisfy both the conditions \eqref{E:TwoCond_Hurwitz}-\eqref{E:TwoCond_Invariance} over all of its level-sets down to the origin. In such a case, a more generalized approach, involving \textit{multiple comparison systems (or, multiple CSs)}, is necessary. Secondly, the proposed direct approach requires solving subsystem-level SOS problems that involve all the state variables associated with the neighborhoods. Consequently, presence of a large neighborhood can severely affect the overall computational speed, and scalability, of the analysis. A pairwise-interactions based approach can further reduce the computational burden at the subsystems { by reducing the size of the SOS problem}. {Finally, it is not clear how to find the scalars $\gamma_i^0\,\forall i$ that define ${\bigcap}_{i=1}^m\!\mathcal{D}_i[\gamma_i^0]$, the domain of definition of the CS. It is logical that the set of values for $\gamma_i^0\,\forall i$ should be found adaptively, given a disturbance, so that the domain ${\bigcap}_{i=1}^m\!\mathcal{D}_i[\gamma_i^0]$ takes a shape that resembles the particular disturbance. In Section\,\ref{S:multiple} we propose a novel analysis framework that attempts to resolve the above-mentioned limitations.}

\section{Multiple Comparison Systems}
\label{S:multiple}
\begin{figure}[t]
      \centering
	\includegraphics[scale=0.18]{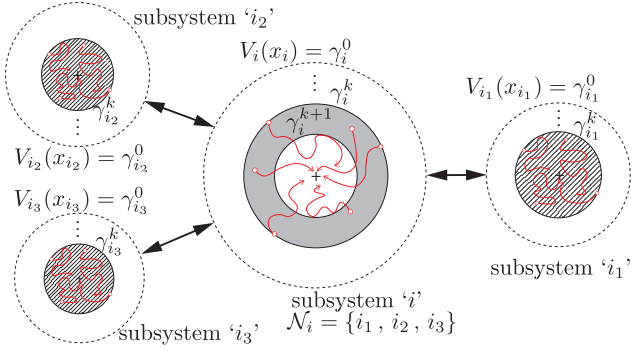}
      \caption{Distributed coordinated sequential stability certification .}
      \label{F:ring}
   \end{figure}   
   In this section, we formulate a generalized CSs approach in which we use a sequence of CSs to collectively certify stability under given disturbances. We also propose a framework to parallelize the subsystem-level SOS problems using the pairwise interactions. Fig.\,\ref{F:ring} illustrates the basic idea behind our proposed formulation. Given any invariant domain $\bigcap_{i=1}^m\mathcal{D}_i[\gamma_i^k]$, $k\!=\!0,1,2,\dots,$ the idea is to find the next invariant domain $\bigcap_{i=1}^m\mathcal{D}_i[\gamma_i^{k+1}]\,$, with $\mathcal{D}_i[\gamma_i^{k+1}]\!\subseteq\!\mathcal{D}_i[\gamma_i^k]~\forall i\,,$ such that any trajectory starting from $\bigcap_{i=1}^m\mathcal{D}_i[\gamma_i^k]$ converges \textit{exponentially} on $\bigcap_{i=1}^m\mathcal{D}_i[\gamma_i^{k+1}]$. {This is done in a distributed way in which each subsystem computes its next invariant level-set and communicates that value to its neighbors, until all the sequences of level-sets converge (to zero, for asymptotic stability). The idea is that,
\begin{lemma}\label{L:multiple1}
If the subsystem LFs of the interconnected system \eqref{E:fi} satisfy the \textit{`multiple CSs'} given by,
\begin{align*}
\forall(k,i): \dot{V}_i\!&\leq\!{\sum}_{j\in\mathcal{N}_i}a_{ij}^k(V_j\!-\!\gamma_j^{k+1})~\text{on}~{\bigcap}_{j\in\mathcal{N}_i}\mathcal{D}_j[\gamma_j^k,\gamma_j^{k+1}]
\end{align*}
with ${\sum}_{j\in\mathcal{N}_i} a_{ij}^k\!<\!0\,\forall (k,i)$ and ${\sum}_{j\in\mathcal{N}_i} a_{ij}^0(\gamma_j^0\!-\!\gamma_j^{1})\!<\!0\,\forall i$\,, then the system trajectories converge exponentially to $\bigcap_{i=1}^m\mathcal{D}_i[\gamma_i^*]$ where $\gamma_i^*\,\forall i$ is the limit of the monotonically decreasing sequence of non-negative scalars $\left\lbrace \gamma_i^k\right\rbrace,\,k\!\in\!\lbrace 0,1,2,\dots\rbrace$\,.
\end{lemma}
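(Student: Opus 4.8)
The natural approach combines three ingredients: convergence of the scalar sequences $\{\gamma_i^k\}_k$, forward invariance of the nested domains $\Omega^k:=\bigcap_{i=1}^m\mathcal D_i[\gamma_i^k]$, and a rescaled shell-by-shell version of the comparison/Gershgorin argument behind Lemma~\ref{L:comparison} and Proposition~\ref{P:Gershgorin}. First I would record the elementary facts: each $\{\gamma_i^k\}_k$ is non-negative and, by hypothesis, non-increasing, hence converges to some $\gamma_i^*\ge 0$; the domains satisfy $\Omega^{k+1}\subseteq\Omega^k$, and, since the $V_i$ are continuous, $\bigcap_{k\ge 0}\Omega^k=\bigcap_{i=1}^m\{x:V_i(x_i)\le\gamma_i^*\}=\bigcap_{i=1}^m\mathcal D_i[\gamma_i^*]$. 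It therefore suffices to show that every trajectory starting in $\Omega^0$ stays in $\Omega^0$ and converges exponentially to $\bigcap_{k}\Omega^k$.

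The device that makes the `multiple CSs' usable is to rescale by the shell widths. Fix $k$ and set $w_i^{(k)}:=\big(V_i(x_i)-\gamma_i^{k+1}\big)/(\gamma_i^k-\gamma_i^{k+1})$, $b_{ij}^{(k)}:=a_{ij}^k\,(\gamma_j^k-\gamma_j^{k+1})/(\gamma_i^k-\gamma_i^{k+1})$, and $\psi_k:=\max_i(w_i^{(k)})_+$. On the shell region the $k$-th hypothesis inequality becomes $\dot w_i^{(k)}\le\sum_{j\in\mathcal N_i}b_{ij}^{(k)}w_j^{(k)}$, where $B^{(k)}=[b_{ij}^{(k)}]$ inherits the non-negative off-diagonal entries from $a_{ij}^k\ge 0$ and has row sums $\sum_j b_{ij}^{(k)}=\big(\sum_j a_{ij}^k(\gamma_j^k-\gamma_j^{k+1})\big)/(\gamma_i^k-\gamma_i^{k+1})$. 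For $k=0$ these are negative precisely by the invariance hypothesis $\sum_j a_{ij}^0(\gamma_j^0-\gamma_j^1)<0$, and $\Omega^0=\{x:\psi_0(x)\le 1\}$, $\Omega^1=\{x:\psi_0(x)=0\}$. A standard Dini/weighted-max estimate — when $\psi_0$ is attained at $i$ with $\psi_0>0$, non-negativity of $b_{ij}^{(0)}$ together with $w_j^{(0)}\le\psi_0$ gives $D^+\psi_0\le\big(\sum_j b_{ij}^{(0)}\big)\psi_0\le\rho_0\,\psi_0$ with $\rho_0:=\max_i\sum_j b_{ij}^{(0)}<0$ — then shows that $\psi_0$ is non-increasing, so $\Omega^0$ is forward invariant, and decays at least at rate $|\rho_0|$, i.e. the trajectory converges exponentially to $\Omega^1$.

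Iterating this over $k$: once the trajectory has entered $\Omega^k$, the neighbours of any critical subsystem lie in their $k$-th sublevel set, so the $k$-th hypothesis inequality governs the descent and the same weighted-max estimate on $\psi_k$ drives the trajectory into $\Omega^{k+1}$ with an exponential rate $|\rho_k|$ (alternatively, since $B^{(k)}$ is Metzler and Hurwitz one may run a LaSalle-type argument to confine the $\omega$-limit set to $\Omega^{k+1}$). By nestedness and $\bigcap_k\Omega^k=\bigcap_i\mathcal D_i[\gamma_i^*]$, this yields convergence to $\bigcap_i\mathcal D_i[\gamma_i^*]$; concatenating the per-shell estimates gives exponential decay of $\mathrm{dist}\big(x(t),\bigcap_i\mathcal D_i[\gamma_i^*]\big)$, which through \eqref{E:cond_VLF_1} also controls $|x(t)|$ relative to the limit set.

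The main obstacle is the bookkeeping that makes the iteration rigorous. First, for $k\ge 1$ the row sums of $B^{(k)}$ equal $\sum_j a_{ij}^k(\gamma_j^k-\gamma_j^{k+1})$ divided by $\gamma_i^k-\gamma_i^{k+1}$, and the bare Hurwitz hypothesis $\sum_j a_{ij}^k<0$ forces them negative only when the shell widths $\gamma_j^k-\gamma_j^{k+1}$ are comparable across $j$ (as they are when produced by the distributed construction of Section~\ref{S:multiple}); otherwise one must either add this as a standing assumption or adapt the weighted-max function to the worst ratio of widths. Second, trajectories may be \emph{de-synchronised}, with different subsystems occupying shells of different index at the same instant, so that no single hypothesis inequality literally governs the full state; the clean fix is to track the least-descended index $K(t):=\min_i k_i(t)$, where $k_i(t)$ is defined by $V_i(x_i(t))\in(\gamma_i^{k_i(t)+1},\gamma_i^{k_i(t)}]$, note that the subsystem attaining this minimum has all neighbours at level sets $\le\gamma_j^{K(t)}$ so that the $K(t)$-th inequality applies to it, and then show $\psi_{K(t)}\to 0$, forcing $K(t)\to\infty$ and hence $x(t)\to\bigcap_k\Omega^k$. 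Finally, to keep the concatenation of per-shell exponential factors from degrading to a sub-exponential rate one needs a uniform lower bound $\inf_k|\rho_k|>0$, which again follows from the construction but must be invoked.
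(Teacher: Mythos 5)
Your route is genuinely different from the paper's. The paper's proof is short: it stacks the hypotheses into the compact form $\dot V\le A^k(V-\gamma^{k+1})$ on the shell intersection, gets Hurwitzness of every $A^k$ from the negative row sums (Gershgorin, since the off-diagonal $a^k_{ij}$ are non-negative), gets invariance of the outermost envelope $\bigcap_i\mathcal D_i[\gamma_i^0]$ from the $k=0$ condition, invokes the linear comparison principle (Lemma~\ref{L:comparison}) stage by stage, and handles the fact that subsystems cross their level sets $\gamma_i^{k+1}$ at different instants by noting that every principal submatrix of $A^k$ obtained by deleting the crossed subsystems still has negative row sums and so remains Hurwitz. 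Your shell-rescaled weighted-max/Dini estimate is a scalar re-derivation of the same diagonal-dominance fact, and your $K(t)$ bookkeeping plays exactly the role of the paper's principal-submatrix remark; what your version buys is an explicit decay rate and an explicit treatment of forward invariance, which the paper asserts rather than derives.

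Two comments on your stated obstacles. The first one (negative row sums of $B^{(k)}$ for $k\ge1$ needing comparable shell widths) is self-inflicted: it is created by dividing by $\gamma_i^k-\gamma_i^{k+1}$. Work instead with the unscaled shifted variables $u_i=V_i-\gamma_i^{k+1}$ and the unweighted $\max_i(u_i)_+$; at a positive maximizer $i$, non-negativity of $a^k_{ij}$ and $u_j\le u_i$ give $D^+u_i\le\bigl({\sum}_j a^k_{ij}\bigr)u_i$, so the stated condition ${\sum}_j a^k_{ij}<0$ alone yields the per-stage exponential decay -- this is precisely what the paper's appeal to Lemma~\ref{L:comparison} delivers, with no width-comparability assumption. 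Your second obstacle (de-synchronization) is the real one, but note that your fix has the same soft spot as the paper's: a neighbor with $k_j(t)>K(t)$ satisfies $V_j\le\gamma_j^{K+1}$ and hence is \emph{not} in the $K$-shell, so the $K$-th hypothesis inequality, whose stated domain is $\bigcap_{j\in\mathcal N_i}\mathcal D_j[\gamma_j^K,\gamma_j^{K+1}]$, does not literally apply to the least-descended subsystem; the paper's submatrix argument silently assumes the reduced inequality persists in exactly the same way. The same domain issue affects stage-$k$ invariance for $k\ge1$ (only the $k=0$ envelope is certified invariant by the hypotheses). So your concerns correctly locate where the lemma's statement is fragile -- this is what the domains in Lemma~\ref{L:multiple}, where neighbors only need $V_j\le\gamma_j^k$, are designed to avoid -- but one of them evaporates if you drop the rescaling, and the other is shared with, not worse than, the paper's own proof; your closing remark that a uniform bound $\inf_k|\rho_k|>0$ is needed for a genuinely exponential overall rate is likewise a point the paper glosses over.
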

\begin{proof}
Note that the CSs can be written compactly as,
\begin{align*}
\forall k:~\dot{V}\!\leq\!A^k(V\!-\!\gamma^{k+1})~\text{on}~{\bigcap}_{i=1}^m\mathcal{D}_i[\gamma_i^k,\gamma_i^{k+1}]\,, 
\end{align*}
where $A^k\!=\![a_{ij}^k]$ and $\gamma^{k+1}\!=\![\gamma_1^{k+1}~\gamma_2^{k+1}\,\dots\,\gamma_m^{k+1}]^T$\,. The conditions ${\sum}_{j\in\mathcal{N}_i} a_{ij}^k\!<\!0\,\forall (k,i)$ and ${\sum}_{j\in\mathcal{N}_i} a_{ij}^0(\gamma_j^0\!-\!\gamma_j^{1})\!<\!0\,\forall i$ imply that $A^k\,\forall k$ is Hurwitz and ${\bigcap}_{i=1}^m\mathcal{D}_i[\gamma_i^0]$ is invariant. Hence, for each $k$, the system trajectories starting inside $\bigcap_{i=1}^m\mathcal{D}_i[\gamma_i^k]$ converge exponentially to $\bigcap_{i=1}^m\mathcal{D}_i[\gamma_i^{k+1}]$, while always staying within $\bigcap_{i=1}^m\mathcal{D}_i[\gamma_i^0]$. 
{
Note that now the new state variables in the comparison system are $(V_i - \gamma_j^{k+1})$. As the subsystems cross the level sets $\gamma_i^{k+1}$, the comparison system changes.  Indeed,
let's say, the subsystems cross into $\gamma_i^{k+1}$ in the order  $\lbrace 1,2,3,\dots\rbrace$. After subsystem 1 crosses, the new comparison system is 
 \begin{align*}
\dot{\tilde{V}}_{2:m}&\leq A^k_{2:m}\,\tilde{V}_{2:m}\\
\text{where,}~\tilde{V}_{2:m}&=[(V_2 - \gamma_2^{k+1}, \dots (V_m - \gamma_m^{k+1})]^T,\\
A^k_{2:m} &= \left[ \begin{array}{ccc}a^k_{22} & \dots & a^k_{2m}\\
\vdots & \ddots & \vdots\\a^k_{m2} &\dots & a^k_{mm}\end{array}\right]\,.
\end{align*}
 Not that each matrix in the sequence $A^k_{2:m}, \ldots, A^k_{m:m} $ remains Hurwitz. Moreover, regardless of the order in which
 the subsystems cross the level sets $\gamma_i^{k+1}$, the sequence of $A$ matrices remains Hurwitz, thus proving 
 finite time convergence to the new level sets.
}\hfill\hfill\qed
\end{proof}
\begin{corollary}\label{C:exponential}
$\gamma_i^*\!=\!0\,\forall i$ implies exponential stability.
\end{corollary}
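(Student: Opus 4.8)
The claim to prove is Corollary~\ref{C:exponential}: if $\gamma_i^* = 0$ for all $i$, then the origin is exponentially stable. The plan is to assemble the pieces already established in Lemma~\ref{L:multiple1} and Theorem~\ref{T:Lyap}, together with the quantitative bounds \eqref{E:cond_VLF_1} relating $V_i$ to $\lvert x_i\rvert^{d_i}$. First I would observe that, by Lemma~\ref{L:multiple1}, along any trajectory starting in $\bigcap_{i=1}^m \mathcal{D}_i[\gamma_i^0]$ each sequence $\{\gamma_i^k\}_k$ is monotonically decreasing with limit $\gamma_i^* = 0$, and on each annular shell $\bigcap_{j\in\mathcal{N}_i}\mathcal{D}_j[\gamma_j^k,\gamma_j^{k+1}]$ the vector LF $V(x(t))$ obeys the comparison inequality $\dot V \le A^k(V-\gamma^{k+1})$ with $A^k$ Hurwitz; hence the trajectory reaches the smaller invariant set $\bigcap_{i=1}^m\mathcal{D}_i[\gamma_i^{k+1}]$ in finite time, and since $\gamma_i^k \to 0$ this forces $V_i(x_i(t)) \to 0$, i.e. $\lvert x_i(t)\rvert \to 0$, for every $i$. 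This already gives asymptotic stability on the domain $\bigcap_{i=1}^m\mathcal{D}_i[\gamma_i^0]$.

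To upgrade to an exponential rate, I would work with the single shell containing the initial condition and the nested comparison principle (Lemma~\ref{L:comparison}). Pick the index $k$ with $x(0)\in\bigcap_i\mathcal{D}_i[\gamma_i^k]\setminus\bigcap_i\mathcal{D}_i[\gamma_i^{k+1}]$, and compare $V(x(t))$ against the solution $r(t)$ of $\dot r = A^k(r-\gamma^{k+1})$, $r(0)=V(x(0))$; since $A^k$ is Hurwitz with nonnegative off-diagonal entries, $r(t)-\gamma^{k+1} = e^{A^k t}(r(0)-\gamma^{k+1})$ decays like $e^{-b_k t}$ for some $b_k>0$ (largest real part of the spectrum of $A^k$), so $V(x(t)) \le r(t) \le \gamma^{k+1} + c_k e^{-b_k t}(V(x(0))-\gamma^{k+1})$ componentwise until the trajectory enters the next shell. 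Iterating this over the (at most countably many, but for a rate argument one should pass to the \emph{common} decay rate $b := \min_k b_k > 0$, which exists because there are finitely many distinct $A^k$ up to the crossing reorderings, or because the $A^k$ can be taken from a fixed finite family) shells, one obtains $|V(x(t))| \le C e^{-bt} |V(x(0))|$ for a uniform $C$. Finally, invoking \eqref{E:cond_VLF_1} — namely $\eta_{i1}|x_i|^{d_i} \le V_i(x_i) \le \eta_{i2}|x_i|^{d_i}$ — converts the exponential decay of $V$ into exponential decay of $|x_i|$, with rate $b/d$ where $d = \max_i d_i$, which is exactly the definition of exponential stability in Definition~\ref{D:stability}.

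I would phrase the final bound as follows: with $\underline\eta = \min_i \eta_{i1}$, $\bar\eta = \max_i \eta_{i2}$, one gets $|x(t)|^{\,\underline d} \le (\bar\eta/\underline\eta)\, C\, e^{-bt} |x(0)|^{\,\overline d}$ for suitable exponents, and after taking roots this yields $|x(t)| \le c\, e^{-(b/d)t}|x(0)|$ on the domain $\bigcap_{i=1}^m\mathcal{D}_i[\gamma_i^0]$, so the equilibrium is exponentially stable on that domain. This is the statement of the corollary (with $\mathcal{R} = \bigcap_{i=1}^m\mathcal{D}_i[\gamma_i^0]$ serving as the region of exponential stability).

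The main obstacle is not any single inequality but rather the bookkeeping needed to extract a \emph{uniform} exponential rate across the infinitely many shells $\{\gamma_i^k\}$: each $A^k$ gives only a shell-local rate $b_k$, and one must argue $\inf_k b_k > 0$ — either by restricting to a finite dictionary of comparison matrices (as the construction implicitly does, since at each level the SOS feasibility problem is of bounded size), or by a separate argument that the $\gamma_i^k$ shrink geometrically so that the total time spent in all shells is finite. A secondary subtlety is that within a shell several subsystems cross their target level sets at different times, changing the effective comparison matrix to a principal submatrix of $A^k$; the proof of Lemma~\ref{L:multiple1} already records that every such submatrix stays Hurwitz, so this only affects the constant, not the rate, and can be handled by taking $b_k$ to be the minimum over the Hurwitz submatrices of $A^k$.
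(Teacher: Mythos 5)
The paper offers no argument for this corollary at all: it is meant to be read off directly from the \emph{statement} of Lemma~\ref{L:multiple1}, which already asserts exponential convergence of the trajectories to $\bigcap_{i=1}^m\mathcal{D}_i[\gamma_i^*]$. With $\gamma_i^*=0$ and $V_i$ positive definite, $\mathcal{D}_i[0]=\{x\,\vert\,V_i(x_i)\le 0\}$ collapses to $\{x\,\vert\,x_i=0\}$, so the limiting set is the origin, and the bounds \eqref{E:cond_VLF_1} convert exponential decay of $V$ into exponential decay of $\vert x\vert$ in the sense of Definition~\ref{D:stability} (exactly as the paper already notes after \eqref{E:comparison} for the single-CS case). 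Your final step (using \eqref{E:cond_VLF_1}) matches this; but instead of invoking the lemma's conclusion you re-derive the convergence shell by shell, and that is where your argument develops a genuine gap.

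The gap is the uniform rate $b=\inf_k b_k>0$. You assert it holds ``because there are finitely many distinct $A^k$ up to the crossing reorderings, or because the $A^k$ can be taken from a fixed finite family,'' but nothing in the construction supports this: the matrices $A^k$ (or the scalars $a_{ii}^k$ in the diagonal version of Lemma~\ref{L:multiple}) are produced by a fresh SOS feasibility problem at every iteration $k$, the iteration index is unbounded, and the constraints only require $a_{ii}^k<0$, so the spectral abscissae may drift to $0$ as $k\to\infty$; the boundedness of the \emph{size} of each SOS problem says nothing about a uniform bound on its \emph{solution}. Likewise, geometric shrinking of the $\gamma_i^k$ is not guaranteed by the construction, and ``total time spent in all shells is finite'' does not follow from the per-shell comparison bound, which by itself yields only asymptotic approach to each target level set. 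The paper quietly sidesteps all of this by packaging ``exponential convergence to $\bigcap_i\mathcal{D}_i[\gamma_i^*]$'' into the statement of Lemma~\ref{L:multiple1}; your concern is thus really a criticism of that lemma rather than something the corollary must resolve, and the natural repair is the one the paper itself hints at in its closing remark of Section~\ref{S:results}: impose a uniform margin, e.g.\ require the row sums (or $a_{ii}^k$) to be bounded above by a fixed negative constant, which immediately gives your $b>0$. A secondary loose end is your last displayed bound mixing the exponents $d_i$: since the $d_i$ may differ across subsystems, the conversion via \eqref{E:cond_VLF_1} should be done subsystem-wise (giving rates $b/d_i$ and constants depending on the bounded initial domain) before taking the minimum rate, rather than with a single pair $\underline d,\overline d$ as written.
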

Note, however, that such a formulation is difficult to implement in a distributed algorithm, since the computation of the $i^{th}$ row of the comparison matrices at the iteration $k$ requires that subsystem-$i$ has knowledge of the $\gamma_j^{k+1}\forall j\!\in\!\mathcal{N}_i\backslash\lbrace i\rbrace$ of its neighbors. A possible approach could be, for each iteration-$k$, compute the $k$-th CS iteratively, i.e. using iterations within iterations. But in this work, we restrict ourselves to a simpler formulation, by seeking only diagonal comparison matrices.
\begin{lemma}\label{L:multiple}
If the subsystem LFs of the system in \eqref{E:fi} satisfy
\begin{align*}
\forall(k,i): \dot{V}_i\!&\leq\!a_{ii}^k(V_i\!-\!\gamma_i^{k+1})\text{ on }\mathcal{D}_i[\gamma_i^k,\gamma_i^{k+1}]{\bigcap}_{j\in\mathcal{N}_i\backslash\lbrace i\rbrace}\mathcal{D}_j[\gamma_j^k]
\end{align*}
with $a_{ii}^k\!<\!0\,\forall (k,i)$\,, then the system trajectories converge exponentially to $\bigcap_{i=1}^m\mathcal{D}_i[\gamma_i^*]$ where $\gamma_i^*\,\forall i$ is the limit of the monotonically decreasing sequence of non-negative scalars $\left\lbrace \gamma_i^k\right\rbrace,\,k\!\in\!\lbrace 0,1,2,\dots\rbrace$\,.
\end{lemma}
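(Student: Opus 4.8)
The plan is to reduce Lemma~\ref{L:multiple} to a repeated application of the single-subsystem comparison principle, exploiting the fact that the comparison matrices are now diagonal so that the $m$ subsystem dynamics decouple at the level of the inequalities. First I would fix an arbitrary iteration index $k$ and an arbitrary subsystem $i$, and examine the region $\mathcal{D}_i[\gamma_i^k,\gamma_i^{k+1}]\cap\bigcap_{j\in\mathcal{N}_i\backslash\lbrace i\rbrace}\mathcal{D}_j[\gamma_j^k]$. On this region the hypothesis gives the scalar differential inequality $\dot V_i\le a_{ii}^k(V_i-\gamma_i^{k+1})$ with $a_{ii}^k<0$. Writing $w_i:=V_i-\gamma_i^{k+1}$, this is $\dot w_i\le a_{ii}^k w_i$, and since $a_{ii}^k$ is a negative scalar (a $1\times1$ Hurwitz matrix trivially has non-negative off-diagonal entries), Lemma~\ref{L:comparison} applies with $m=1$: comparing with the solution of $\dot r=a_{ii}^k r$, $r(0)=w_i(0)>0$, we get $w_i(t)\le w_i(0)e^{a_{ii}^k t}$ as long as the trajectory stays in that region, i.e. $V_i(x_i(t))-\gamma_i^{k+1}$ decays exponentially toward $0$. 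Thus each subsystem, once all of its neighbors (including itself) are inside their level-$\gamma_j^k$ sets, drives $V_i$ exponentially down to $\gamma_i^{k+1}$.

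The second step is the bookkeeping that glues these per-subsystem estimates into a statement about the whole network. I would argue, as in the proof of Lemma~\ref{L:multiple1}, that $\bigcap_{i=1}^m\mathcal{D}_i[\gamma_i^k]$ is invariant for every $k$: on its boundary, whenever $V_i=\gamma_i^k>\gamma_i^{k+1}$ for some $i$ while all other $V_j\le\gamma_j^k$, the inequality $\dot V_i\le a_{ii}^k(\gamma_i^k-\gamma_i^{k+1})<0$ shows the trajectory cannot exit through that face (the $k=0$ case uses exactly the stated sign conditions, and for $k\ge1$ the set $\bigcap_i\mathcal{D}_i[\gamma_i^k]$ is reached only from inside $\bigcap_i\mathcal{D}_i[\gamma_i^{k-1}]$, so invariance propagates). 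Given invariance, a trajectory starting in $\bigcap_i\mathcal{D}_i[\gamma_i^k]$ stays there, so for each $i$ the region of validity of the $i$-th inequality is entered immediately and remains valid until $V_i$ drops below $\gamma_i^{k+1}$; by the step-one estimate this happens, and in fact in finite time each $V_i$ enters $\mathcal{D}_i[\gamma_i^{k+1}]$ (strictly, it approaches $\gamma_i^{k+1}$ exponentially, which suffices to conclude $x(t)\in\bigcap_i\mathcal{D}_i[\gamma_i^{k+1}]$ in the limit, or to enter any slightly enlarged set in finite time). Iterating over $k$, the trajectory is funneled through the nested sequence $\bigcap_i\mathcal{D}_i[\gamma_i^0]\supseteq\bigcap_i\mathcal{D}_i[\gamma_i^1]\supseteq\cdots$, and since $\lbrace\gamma_i^k\rbrace$ is monotonically decreasing and bounded below by $0$, it converges to some limit $\gamma_i^*\ge0$; hence $x(t)\to\bigcap_i\mathcal{D}_i[\gamma_i^*]$.

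The main obstacle I anticipate is the subtlety already flagged in Lemma~\ref{L:multiple1}: the different subsystems do not in general cross their level sets $\gamma_i^{k+1}$ simultaneously, so I must be careful that the $i$-th inequality is genuinely in force over the whole time interval during which I want to use it. Here the diagonal structure is a real simplification over Lemma~\ref{L:multiple1} — subsystem $i$'s inequality only requires $V_i\in(\gamma_i^{k+1},\gamma_i^k]$ and $V_j\le\gamma_j^k$ for $j\in\mathcal{N}_i\backslash\lbrace i\rbrace$, and both are guaranteed by invariance of $\bigcap_i\mathcal{D}_i[\gamma_i^k]$ plus the monotonic decrease of each $V_j$ from $\gamma_j^k$ — so no ordering argument on the crossings is needed, and no joint Hurwitz condition must be maintained. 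The only remaining care is the standard one: the decay is exponential toward $\gamma_i^{k+1}$, not a finite-time hit, so to claim passage into the \emph{closed} set $\bigcap_i\mathcal{D}_i[\gamma_i^{k+1}]$ and restart the argument one either passes to the limit $t\to\infty$ within stage $k$ and then uses a continuity/limit argument, or enlarges each target by an $\varepsilon$ and lets $\varepsilon\to0$ at the end; I would state this explicitly to keep the induction clean. Finally, Corollary~\ref{C:exponential}'s conclusion (exponential stability when $\gamma_i^*=0$) is not reproved here but follows by combining the geometric decrease of the $\gamma_i^k$ with the uniform exponential rates $|a_{ii}^k|$ on each shell and the bounds \eqref{E:cond_VLF_1}.
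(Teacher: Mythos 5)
Your proposal is correct and takes essentially the same route as the paper, whose entire proof of this lemma is the remark that $\bigcap_{i=1}^m\mathcal{D}_i[\gamma_i^k]$ is invariant for every $k$ followed by ``the rest is trivial.'' Your per-subsystem scalar comparison on each shell, the boundary argument for invariance, and your explicit treatment of the one delicate point (that entry into the next sublevel set $\bigcap_i\mathcal{D}_i[\gamma_i^{k+1}]$ is only asymptotic, so the passage from stage $k$ to stage $k+1$ needs a limit/continuity argument) supply exactly the details the paper leaves implicit.
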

\begin{proof}
${\bigcap}_{i=1}^m\mathcal{D}_i[\gamma_i^k]$ are invariant $\forall k\,$. The rest is trivial.\qed
\end{proof}
\begin{proposition}\label{P:convergence}
Exponential stability, i.e. $\gamma_i^*\!=\!0\,\forall i$\,, is guaranteed via the multiple (diagonal) CSs approach, if and only if 
$\nabla V_i^T(f_i\!+\!g_i)\!<\!0\text{ on }\mathcal{D}_i^b[\gamma_i^k]\bigcap_{j\in\mathcal{N}_i\backslash\lbrace i\rbrace}\mathcal{D}_j[\gamma_j^k]~\forall (k,i)\,.$
\end{proposition}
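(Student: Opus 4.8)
The plan is to reduce the asserted equivalence to a \emph{per-stage} one and then feed it into Lemma~\ref{L:multiple} and Corollary~\ref{C:exponential}. Fix a subsystem $i$, consecutive levels $\gamma_i^{k+1}<\gamma_i^k$, and the neighbours' levels $\gamma_j^k$, and recall that $\dot V_i=\nabla V_i^T(f_i+g_i)$ is a polynomial in $\bar x_i$ only. Write $S_i^k:=\mathcal{D}_i^b[\gamma_i^k]\bigcap_{j\in\mathcal{N}_i\setminus\{i\}}\mathcal{D}_j[\gamma_j^k]$ for the outer slice, on which $V_i\equiv\gamma_i^k$, and $A_i^k:=\mathcal{D}_i[\gamma_i^k,\gamma_i^{k+1}]\bigcap_{j\in\mathcal{N}_i\setminus\{i\}}\mathcal{D}_j[\gamma_j^k]$ for the annular region appearing in Lemma~\ref{L:multiple}; in $\bar x_i$-space $S_i^k\subset A_i^k$ and $S_i^k$ is compact. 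The key step is the per-stage claim: a scalar $a_{ii}^k<0$ with $\dot V_i\le a_{ii}^k(V_i-\gamma_i^{k+1})$ on $A_i^k$ exists \emph{iff} $\nabla V_i^T(f_i+g_i)<0$ on $S_i^k$.

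The ``only if'' half is immediate: restricting the differential inequality to $S_i^k$, where $V_i=\gamma_i^k$, yields $\nabla V_i^T(f_i+g_i)\le a_{ii}^k(\gamma_i^k-\gamma_i^{k+1})<0$ since $a_{ii}^k<0$ and $\gamma_i^k>\gamma_i^{k+1}$. Invoking this at every $(k,i)$ occurring in a run of the scheme that delivers $\gamma_i^\ast=0\,\forall i$ gives the necessity half of the proposition. For the ``if'' half I would use continuity of $\dot V_i$ and compactness of $S_i^k$: $\dot V_i\le-c$ on $S_i^k$ for some $c>0$, and a routine subsequence argument shows this bound persists (with $-c/2$) on a closed tube around $S_i^k$, hence on $A_i^k$ once $\gamma_i^{k+1}$ is chosen close enough to $\gamma_i^k$. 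Taking $a_{ii}^k:=-c/\big(2(\gamma_i^k-\gamma_i^{k+1})\big)<0$ then gives, on $A_i^k$, $a_{ii}^k(V_i-\gamma_i^{k+1})\ge a_{ii}^k(\gamma_i^k-\gamma_i^{k+1})=-c/2\ge\dot V_i$, i.e. a valid diagonal CS at stage $k$.

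To finish the sufficiency direction of the proposition, I would iterate: starting from $\{\gamma_i^0\}$, the hypothesis lets every subsystem strictly lower its level at every stage, and having each subsystem take (essentially) its maximal feasible decrease produces monotone sequences $\{\gamma_i^k\}$ meeting the hypotheses of Lemma~\ref{L:multiple} at each step, so the trajectories converge exponentially onto $\bigcap_i\mathcal{D}_i[\gamma_i^\ast]$ with $\gamma_i^\ast:=\lim_k\gamma_i^k$. One then argues $\gamma_i^\ast=0\,\forall i$: if some $\gamma_i^\ast>0$, a compactness argument at the accumulation level forces $\dot V_i$ to vanish at a point of the limiting slice $\mathcal{D}_i^b[\gamma_i^\ast]\bigcap_{j\in\mathcal{N}_i\setminus\{i\}}\mathcal{D}_j[\gamma_j^\ast]$ --- otherwise the maximal decrease would have pushed the level strictly below $\gamma_i^\ast$ --- which contradicts the hypothesis read at that slice. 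With $\gamma_i^\ast=0\,\forall i$, Corollary~\ref{C:exponential} yields exponential stability.

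I expect the main obstacle to be exactly this last point: upgrading ``the scheme never stalls'' to ``the scheme reaches zero''. The per-stage construction only guarantees a strictly positive decrement whose size is tied to the possibly shrinking margin $c=c_i^k$, so one must exclude the level sets asymptoting to a positive $\gamma_i^\ast$; this relies on the compactness/continuity argument at the accumulation level, on reading the quantifier ``$\forall(k,i)$'' as also covering the limiting slices, and on the level sets $\mathcal{D}_i^b[\gamma]$ being non-degenerate over the relevant range of $\gamma$, so that points of the limiting slice are limits of points of the $S_i^k$. The remaining pieces --- the outer-slice evaluation and the explicit choice of $a_{ii}^k$ --- are routine.
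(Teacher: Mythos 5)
Your proposal is correct and follows essentially the same route as the paper's (two-line) proof: necessity by evaluating the diagonal CS inequality on the slice $\mathcal{D}_i^b[\gamma_i^k]\bigcap_{j\in\mathcal{N}_i\backslash\lbrace i\rbrace}\mathcal{D}_j[\gamma_j^k]$ where $V_i=\gamma_i^k>\gamma_i^{k+1}$, and sufficiency from continuity of the polynomial $\nabla V_i^T(f_i+g_i)$ on that compact slice, which lets each subsystem strictly lower its level. You simply spell out (including the explicit choice of $a_{ii}^k$ and the ``never stalls'' versus ``reaches zero'' subtlety at the limiting level) what the paper compresses into ``sufficiency follows from the continuity of the polynomial functions.''
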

\begin{proof}
Exponential stability requires $\gamma_i^{k+1}\!<\!\gamma_i^k\,\forall (k,i)$ which immediately proves \textit{necessity}. The \textit{sufficiency} follows from the continuity of the polynomial functions.\hfill\hfill\qed
\end{proof}
\begin{corollary}\label{C:convergence}
Exponential stability is guaranteed if $\alpha(\gamma_i^k)\gamma_i^k\!>\!\max_{x\in\mathcal{D}_i^b[\gamma_i^k]\bigcap_{j\in\mathcal{N}_i\backslash\lbrace i\rbrace}\mathcal{D}_j[\gamma_j^k]}\nabla V_i^T\!g_i\,~\forall (k,i)\,$.
\end{corollary}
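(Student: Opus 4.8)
The plan is to reduce Corollary~\ref{C:convergence} to Proposition~\ref{P:convergence} by showing that the stated inequality on $\nabla V_i^T g_i$ is exactly a sufficient condition for the strict negativity of $\dot{V}_i = \nabla V_i^T(f_i + g_i)$ on the relevant boundary-intersection set, and then invoking Proposition~\ref{P:convergence} verbatim. So first I would recall the definition of the self-decay rate from \eqref{E:self_decay}: on $\mathcal{D}_i^b[\gamma_i^k]$ we have $\nabla V_i^T f_i(x_i) \le -\alpha_i(\gamma_i^k)\, V_i(x_i) = -\alpha_i(\gamma_i^k)\,\gamma_i^k$, since $V_i(x_i) = \gamma_i^k$ on that boundary set. (I note the corollary writes $\alpha$ rather than $\alpha_i$, but from context this is the $i$-th self-decay rate $\alpha_i$ of \eqref{E:self_decay}.)

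Next, on the set $\mathcal{D}_i^b[\gamma_i^k]\bigcap_{j\in\mathcal{N}_i\backslash\lbrace i\rbrace}\mathcal{D}_j[\gamma_j^k]$, which is a subset of $\mathcal{D}_i^b[\gamma_i^k]$, the decay bound still holds, so
\begin{align*}
\nabla V_i^T(f_i + g_i) &= \nabla V_i^T f_i + \nabla V_i^T g_i \le -\alpha_i(\gamma_i^k)\,\gamma_i^k + \nabla V_i^T g_i.
\end{align*}
Taking the supremum of the right-hand side over the set and using the hypothesis $\alpha_i(\gamma_i^k)\,\gamma_i^k > \max_{x\in\mathcal{D}_i^b[\gamma_i^k]\bigcap_{j}\mathcal{D}_j[\gamma_j^k]} \nabla V_i^T g_i$, the bracketed quantity is strictly negative, hence $\nabla V_i^T(f_i+g_i) < 0$ on the whole set, for every pair $(k,i)$. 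This is precisely the hypothesis of Proposition~\ref{P:convergence}, so exponential stability ($\gamma_i^* = 0\,\forall i$) follows. A small technical point I would address is that the maximum in the corollary statement is assumed to exist (attained), which is fine since $\mathcal{D}_i^b[\gamma_i^k]$ is compact — being the preimage of a point under the continuous $V_i$ restricted to a compact sublevel set — and the intersected sets $\mathcal{D}_j[\gamma_j^k]$ are closed, so the set is compact and $\nabla V_i^T g_i$ (a polynomial) attains its maximum there; I would mention this only briefly.

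The only genuine obstacle, and it is minor, is matching the strict versus non-strict inequalities: the decay bound \eqref{E:self_decay} is non-strict ($\le$), so I get $-\alpha_i\gamma_i^k + \nabla V_i^T g_i \le -\alpha_i\gamma_i^k + \max(\cdots)$, and the strict gap in the hypothesis makes the final bound strict. Since the max is attained, the chain $\nabla V_i^T(f_i+g_i) \le -\alpha_i\gamma_i^k + \max(\cdots) < 0$ is valid pointwise on the set without any closure or continuity subtlety beyond what is already invoked in Proposition~\ref{P:convergence}. I would therefore present this as a two-line argument: substitute the self-decay bound, bound $\nabla V_i^T g_i$ by its maximum, and apply the hypothesis and Proposition~\ref{P:convergence}. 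The corollary is essentially just a convenient restatement of Proposition~\ref{P:convergence} in terms of the more interpretable quantities $\alpha_i(\gamma_i^k)$ and the interaction term, so no deeper machinery is needed.
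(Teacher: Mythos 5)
Your proposal is correct and follows exactly the argument the paper intends: the corollary is stated without proof as an immediate consequence of Proposition~\ref{P:convergence}, obtained by combining the self-decay bound \eqref{E:self_decay} (which on $\mathcal{D}_i^b[\gamma_i^k]$ gives $\nabla V_i^T f_i \le -\alpha_i(\gamma_i^k)\gamma_i^k$) with the hypothesis on $\nabla V_i^T g_i$ to conclude $\nabla V_i^T(f_i+g_i)<0$ on the relevant set. Your brief remarks on attainment of the maximum and on strict versus non-strict inequalities are harmless additions that do not change the substance.
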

The computation of the \textit{multiple CSs} in Lemma\,\ref{L:multiple} involves two phases. In \textit{Phase 1}, we search for the level-sets $\gamma_i^0\!\in\![0,1)\,\forall i$ such that the system trajectories starting from some initial level-sets $V_i(x_i(0))\!=\!v_i^0\!\in\![0,\gamma_i^0]\,\forall i,$ will always stay within $\bigcap_{i=1}^m\mathcal{D}_i[\gamma_i^0]$ which we term as an \textit{`invariant envelope'} of $\bigcap_{i=1}^m\mathcal{D}_i[v_i^0]$. In \textit{Phase 2}, we compute $a_{ii}^k$ and $\gamma_i^{k+1}\forall(k,i)$\,.}

\subsection{Distributed Construction}\label{S:distributed}

\subsubsection{Phase 1: Find the Invariant Envelope $\bigcap_i\mathcal{D}_i[\gamma_i^0]$}\label{S:phase1}

{We search for the smallest $\gamma_i^0\!\in\![v_i^0,1)\,\forall i$ that satisfy,  
\begin{align}\label{E:phase1}
\forall i\!:\quad&\dot{V}_i\leq \!0\,~\text{on}~\mathcal{D}_i^b[\gamma_i^0]{\bigcap}_{j\in\mathcal{N}_i\backslash\lbrace i\rbrace}\mathcal{D}_j[\gamma_j^0],
\end{align}
This} requires \textit{knowledge} of the neighbors' \textit{expanded} level-sets, and hence can only be solved via an iterative process which aims to achieve an agreement between the neighboring subsystems on their individual \textit{expanded} level-sets. {Setting $\hat{\gamma}_i^{0}\!=\!v_i^0\,\forall i$ we compute the monotonically increasing sequences $\lbrace \hat{\gamma}_i^{l}\rbrace\,\forall i\,,\,l\!\in\!\lbrace 0,1,2,\dots\rbrace$ satisfying the following, 
\begin{subequations}\begin{align}
\forall(l,i):\quad &\dot{V}_i\leq \!0\,~\text{on}~\,\mathcal{D}_i^b[\hat{\gamma}_i^{l+1}]{\bigcap}_{j\in\mathcal{N}_i\backslash\lbrace i\rbrace} \!\!\mathcal{D}_j[\hat{\gamma}_j^{l}], \label{E:phase1_iterative}\\
\text{(SOS)}:\quad &\!\!\!\!\!\!\left\lbrace\begin{array}{l}
-\!\nabla V_i^T\!\left(f_i\!+\!g_i\right)\!-\! \sigma_{ii}(\hat{\gamma}_i^{l+1}\!\!-\!V_i )  \\
	\quad\qquad- {\sum}_{j\in\mathcal{N}_i\backslash\lbrace i\rbrace} \sigma_{ij}(\hat{\gamma}_j^{l}\!\!\!-\!V_j ) \!\in \Sigma[\bar{x}_i]\,,  \\	
\sigma_{ii}\in\mathbb{R}[\bar{x}_i],~\sigma_{ij}\in\Sigma[\bar{x}_i]\,\forall \!j\!\neq\!i\,,\end{array}\!\!\!\!\right.\!\!\!\!
\end{align}\end{subequations}
which is accomplished by finding the smallest $\hat{\gamma}_i^{l+1}$, using an \textit{incremental-search} approach\footnote{$\hat{\gamma}_i^{l+1}$ is increased in small steps until the SOS problem is feasible.} to handle the bilinear term in $\sigma_{ii}$ and $\hat{\gamma}_i^{l+1}$. 
If $\lbrace \hat{\gamma}_i^{l}\rbrace\,\forall i$ converge at some $l\!=\!L\,$, we assign $\gamma_i^0\!\gets\! \hat{\gamma}_i^{L}\,\forall i$
and stop. }

\subsubsection{Phase 2: Find the Diagonal Comparison Matrices}\label{S:phase2}

{With the \textit{invariant envelope} already found, we can compute the \textit{multiple CSs} in Lemma\,\ref{L:multiple}. At each iteration $k$, each subsystem-$i$ computes $a_{ii}^k$ and the smallest $\gamma_i^{k+1}\,$, using a \textit{bisection-search} on $\gamma_i^{k+1}$ over $[0,\,\gamma_i^k]$\,, which satisfy:
\begin{subequations}\label{E:phase2_step2}
\begin{align}
\!\!\!\!\!\!\!\!\forall(k,i):~& \dot{V}_i\!\leq\!a_{ii}^k(V_i\!-\!\gamma_i^{k+1})\text{ with }a_{ii}^k\!<\!0\,,\!\!\!\!\\
&\text{everywhere on }\mathcal{D}_i[\gamma_i^k\!,\gamma_i^{k+1}]{\bigcap}_{j\in\mathcal{N}_i\backslash\lbrace i\rbrace}\mathcal{D}_j[\gamma_j^k]\,. \!\!\!\!\notag\\
\!\!\!\!\!\!\!\!\text{(SOS)}:~&\!\!\!\!\!\!\left\lbrace\!\!\begin{array}{l} 
\!\!- \!\! \nabla V_i^T\!\!\left(f_i\!+\!g_i\right)\!+\!(a_{ii}^k-\underline{\sigma}_{ii})(V_i\!-\!\gamma_i^{k+1}) \\
 \qquad\qquad\qquad + {\sum}_{j\in\mathcal{N}_i} \sigma_{ij}(V_j \!-\! \gamma_j^{k})\in \Sigma[\bar{x}_i]\,,\\
\!-\!{a}_{ii}^{k}\!\in\!\Sigma[0],\,\underline{\sigma}_{ii}\in\Sigma[\bar{x}_i]\,,\,\sigma_{ij}\in\Sigma[\bar{x}_i]\,\,\forall j\!\in\!\mathcal{N}_i\,.\end{array}\right.\!\!\!\!\!\!\!\!
\end{align}\end{subequations}
We continue until the sequences $\lbrace \gamma_i^k\rbrace\,\forall i$ converge. The exponential stability is guaranteed if $\gamma_i^{k+1}\!\!=0\,\forall i$\,.}

\subsection{Distributed Parallel Construction}\label{S:parallel}

The computational complexities in the SOS problems in Section\,\ref{S:distributed} are largely dominated by the dimension of the state-space of the associated neighborhood. To circumvent this limitation, we propose a parallel formulation of the SOS problems based on the pairwise interaction terms. Note that, 

{\begin{lemma}\label{L:multiple_parallel}
If the subsystem LFs of the system in \eqref{E:fi} satisfy 
\begin{align*}
\forall(k,i,j):~ \nabla V_i^T(w_{ij}^kf_i+g_{ij})\!&\leq\!a_{ii,j}^k(V_i\!-\!\gamma_i^{k+1})\,,~w_{ij}^k\!\geq\!0\,,
\end{align*}
on $\mathcal{D}_i[\gamma_i^k,\gamma_i^{k+1}]{\bigcap}\mathcal{D}_j[\gamma_j^k]$, with $\sum_{j\in\!\mathcal{N}_i\backslash\lbrace i\rbrace}w_{ij}^k\!<\!1$ and $a_{ii,j}^k\!<\!0\,\forall (k,i)$\,, then the system trajectories converge exponentially to $\bigcap_{i=1}^m\mathcal{D}_i[\gamma_i^*]$ where $\gamma_i^*\,\forall i$ is the limit of the monotonically decreasing sequence of non-negative scalars $\left\lbrace \gamma_i^k\right\rbrace,\,k\!\in\!\lbrace 0,1,2,\dots\rbrace$\,.
\end{lemma}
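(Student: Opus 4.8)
The plan is to show that summing the assumed pairwise inequalities over the neighbours of each subsystem turns the hypotheses of Lemma~\ref{L:multiple_parallel} into exactly those of Lemma~\ref{L:multiple} with a diagonal comparison matrix; the conclusion then follows verbatim from Lemma~\ref{L:multiple} (and, through Proposition~\ref{P:convergence}, characterises when $\gamma_i^*\!=\!0$). The only facts beyond elementary algebra that this needs are the additive coupling structure $g_i=\sum_{j\ne i}g_{ij}$ from~\eqref{E:gij}, the ``partition-of-unity'' constraint $\sum_{j\in\mathcal{N}_i\setminus\{i\}}w_{ij}^k<1$, and the sign property $\nabla V_i^T f_i\le 0$ on $\mathcal{D}_i[\gamma_i^0]$, which holds because the expanding-interior algorithm producing $V_i$ certifies $\mathcal{R}_i^0=\{V_i\le 1\}$ to lie inside the isolated region of attraction (cf.~\eqref{E:cond_VLF_2}) and $\gamma_i^k\le\gamma_i^0<1$.

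First I would fix the iteration index $k$ and a subsystem $i$, and work on the set $\mathcal{D}_i[\gamma_i^k,\gamma_i^{k+1}]\bigcap_{j\in\mathcal{N}_i\setminus\{i\}}\mathcal{D}_j[\gamma_j^k]$ appearing in Lemma~\ref{L:multiple}; since this set is contained in every two-subsystem region $\mathcal{D}_i[\gamma_i^k,\gamma_i^{k+1}]\cap\mathcal{D}_j[\gamma_j^k]$, all the hypothesised pairwise bounds are valid there simultaneously. Summing them over $j\in\mathcal{N}_i\setminus\{i\}$ and using $g_i=\sum_{j\ne i}g_{ij}$ gives
\begin{align*}
\Bigl({\sum}_{j\ne i}w_{ij}^k\Bigr)\nabla V_i^T f_i+\nabla V_i^T g_i\ \le\ \Bigl({\sum}_{j\ne i}a_{ii,j}^k\Bigr)\bigl(V_i-\gamma_i^{k+1}\bigr)\,.
\end{align*}
Next I would add the leftover self-term $\bigl(1-\sum_{j\ne i}w_{ij}^k\bigr)\nabla V_i^T f_i$ to both sides: the left-hand side becomes $\nabla V_i^T(f_i+g_i)=\dot V_i$, while on the right this term carries the positive coefficient $1-\sum_{j\ne i}w_{ij}^k$ multiplying $\nabla V_i^T f_i\le 0$, so it is nonpositive and may be dropped. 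With $a_{ii}^k:=\sum_{j\in\mathcal{N}_i\setminus\{i\}}a_{ii,j}^k$ --- a sum of negative scalars over a nonempty neighbour set, hence negative --- this yields $\dot V_i\le a_{ii}^k\bigl(V_i-\gamma_i^{k+1}\bigr)$ with $a_{ii}^k<0$ on the required region. As $k$ and $i$ were arbitrary, Lemma~\ref{L:multiple} applies and delivers the monotone decrease of $\{\gamma_i^k\}$ to a limit $\gamma_i^*$ together with exponential convergence of the trajectories to $\bigcap_{i=1}^m\mathcal{D}_i[\gamma_i^*]$.

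The step I expect to be the real obstacle is not the summation but establishing $\nabla V_i^T f_i\le 0$ over the whole annulus $\mathcal{D}_i[\gamma_i^k,\gamma_i^{k+1}]$, because~\eqref{E:cond_VLF_2} is stated only on $\mathcal{D}_i$; one must invoke that the expanding-interior construction certifies $\nabla V_i^T f_i<0$ on all of $\{0<V_i\le 1\}\supseteq\mathcal{D}_i[\gamma_i^0]$, so that the bound is in force wherever Phase~2 operates. Two smaller points should also be made explicit: the weights $w_{ij}^k$ and scalars $a_{ii,j}^k$ are allowed to vary with $k$, so one should note that the induced sequence of diagonal comparison matrices is Hurwitz for every $k$ (immediate here); and the invariance of the seeding envelope $\bigcap_i\mathcal{D}_i[\gamma_i^0]$ needed to start the induction is provided by Phase~1, not by this lemma. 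Finally, the degenerate case $\mathcal{N}_i=\{i\}$, where the sum defining $a_{ii}^k$ is empty, must be handled separately, directly from the strict isolated decay $\nabla V_i^T f_i<0$ on $\mathcal{D}_i[\gamma_i^0]\setminus\{0\}$.
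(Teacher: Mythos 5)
Your proposal is correct and follows essentially the same route as the paper's own (very terse) proof: sum the pairwise inequalities over $j\in\mathcal{N}_i\setminus\{i\}$, add the leftover term $(1-\sum_{j}w_{ij}^k)\nabla V_i^T f_i$, use its nonpositivity on the relevant level sets to drop it, and reduce to the diagonal multiple-CS setting of Lemma~\ref{L:multiple} with $a_{ii}^k=\sum_{j}a_{ii,j}^k<0$. Your explicit caveats (sign of $\nabla V_i^T f_i$ on the annulus, Phase-1 invariance, the degenerate neighbourhood case) are points the paper leaves implicit, but they do not change the argument.
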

\begin{proof}
Note that $\nabla V_i^T\!(f_i\!+\!\sum_jg_{ij})\!\leq\!\sum_j\alpha_{ii,j}^k(V_i\!-\!\gamma_i^{k+1})\!+\!(1\!-\!\sum_jw_{ij}^k)\nabla V_i^T\!\!f_i\!<\!\sum_j\alpha_{ii,j}^k(V_i\!-\!\gamma_i^{k+1})$. The rest is trivial.\hfill\hfill\qed
\end{proof}
N}ext we present an alternative formulation of the algorithmic steps in Section\,\ref{S:distributed} by finding these \textit{`weights'}, $w_{ij}^k$, and then using these \textit{weights} to parallelize the SOS problems.

\subsubsection{Phase 1: Find the Invariant Envelope $\bigcap_i\mathcal{D}_i[\gamma_i^0]$}\label{S:phase1_parallel}

{We set $\hat{\gamma}_i^{0}\!=\!v_i^0\,\forall i$ and compute the sequences $\lbrace \hat{\gamma}_i^{l}\rbrace\,\forall i$ by finding, for each subsystem-$i$\,, the smallest $\hat{\gamma}_i^{l+1}$ such that ${\sum}_{j\in\! \mathcal{N}_i\!\backslash\!\lbrace i\rbrace} \hat{w}_{ij}^{l}\!\!<\!\!1\,,$ where the \textit{`weights'}, $\hat{w}_{ij}^{l},$ are defined as, 
\begin{subequations}\label{E:phase1_parallel}
\begin{align}
\forall (l,i,j):~& \hat{w}_{ij}^{l}\!=\! \min\!\left\lbrace \hat{w}\left\vert\, \begin{array}{c} \nabla{V}_i^T\!(\hat{w} f_i\!+\!g_{ij})\!\leq\!0\\
					\text{on}~\!\mathcal{D}_i^b[\hat{\gamma}_i^{l+1}]\bigcap\mathcal{D}_j[\hat{\gamma}_j^{l}]\end{array}\!\right.\!\right\rbrace\!, \\
\text{(SOS)}:~&\left\lbrace\begin{array}{l} \underset{\sigma_{ii},\,\sigma_{ij}}{\text{minimize}}~\hat{w}\,,\,\text{subject to:}\\
\!-\!\nabla{V}_i^T\!(\hat{w} f_i\!+\!g_{ij})\!-\!\sigma_{ii}(\hat{\gamma}_i^{l+1}\!\!\!-\!V_i) \\
			\qquad\qquad-\sigma_{ij}(\hat{\gamma}_j^{l}\!-\!V_j)\in\Sigma[\,x_i\,,\,x_j], \\
			\!\sigma_{ii}\!\in\!\mathbb{R}[x_i,x_j],\,\sigma_{ij}\!\in\!\Sigma[x_i,x_j].
\end{array}\right. 	
\end{align}\end{subequations}
This is done using an incremental-search approach on $\hat{\gamma}_i^{l+1}$. If $\lbrace \hat{\gamma}_i^{l}\rbrace\,\forall i$ converge at $l\!=\!L\,$, we assign $\gamma_i^0\!\gets\! \hat{\gamma}_i^{L}\,\forall i$ and stop.}

\subsubsection{Phase 2: Find the Diagonal Comparison Matrices}\label{S:phase2_parallel}

{\textit{Phase 2} of the process essentially remains same as the one in described in Section\,\ref{S:phase2} except that we need to additionally compute the \textit{weights} $w_{ij}^k$\,. For each subsystem-$i$, we perform a \textit{bisection-search} on the smallest $\gamma_i^{k+1}$ over $[0,\,\gamma_i^k]$\,, such that $\sum_{j\in\mathcal{N}_i}w_{ij}^k\!<\!1$\,, where the weights $w_{ij}^k$ are defined as,
\begin{subequations}\label{E:phase2_parallel}
\begin{align}
\!\!\!\!\!\!\forall (k,i,j&):~w_{ij}^k\!:=\!\\
&\qquad\min\!\left\lbrace\! {w}\left\vert\, \begin{array}{c} \nabla{V}_i^T\!({w} f_i\!+\!g_{ij})\!\leq\!a_{ii,j}^k(V_i\!-\!\gamma_i^{k+1})\\
\!\text{on $\mathcal{D}_i[\gamma_i^k,\gamma_i^{k+1}]{\bigcap}\mathcal{D}_j[\gamma_j^k]$}\,,~a_{ii,j}^k\!<\!0.\end{array}\!\!\right.\!\!\right\rbrace\!\!,\!\!\!\!\!\!\notag\\
\!\!\!\!\!\text{(SOS)}\!:&\!\left\lbrace\!\!\begin{array}{l} 
\underset{a_{ii,j}^k,\,\underline{\sigma}_{ii},\,\sigma_{ii},\,\sigma_{ij}}{\text{minimize}}~\hat{w}\,,\,\text{subject to:}\\
\!\!- \!\! \nabla V_i^T\!\!\left(wf_i\!+\!g_{ij}\right)\!+ \! (a_{ii,j}^{k}\!-\!\underline{\sigma}_{ii})(V_i \!-\! \gamma_i^{k+1})\\
\qquad\qquad\quad+ {\sum}_{p=i,j}{\sigma}_{ip}(V_p\!-\!\gamma_p^k ) \in \Sigma[x_i,x_j]\,,\\
\!-\!{a}_{ii,j}^{k}\!\in\!\Sigma[0]\text{ and }\underline{\sigma}_{ii},\,\sigma_{ii},\,\sigma_{ij}\!\in\!\Sigma[x_i,x_j]\,.\end{array}\right.\!\!\!\!\!\!
\end{align}\end{subequations}
At each $k$\,, we solve the above bisection search to find the smallest $\gamma_i^{k+1}\,\forall i$ satisfying $\sum_{j\in\mathcal{N}_i}w_{ij}^k\!<\!1$\,, until $\lbrace\gamma_i^k\rbrace\,\forall i$ converges. If $\gamma_i^{k+1}\!=\!0\,\forall i$, the exponential stability is guaranteed.}

\begin{figure*}[thpb]
\centering
\subfigure[ROAs for isolated subsystem 9]{
\includegraphics[scale=0.36]{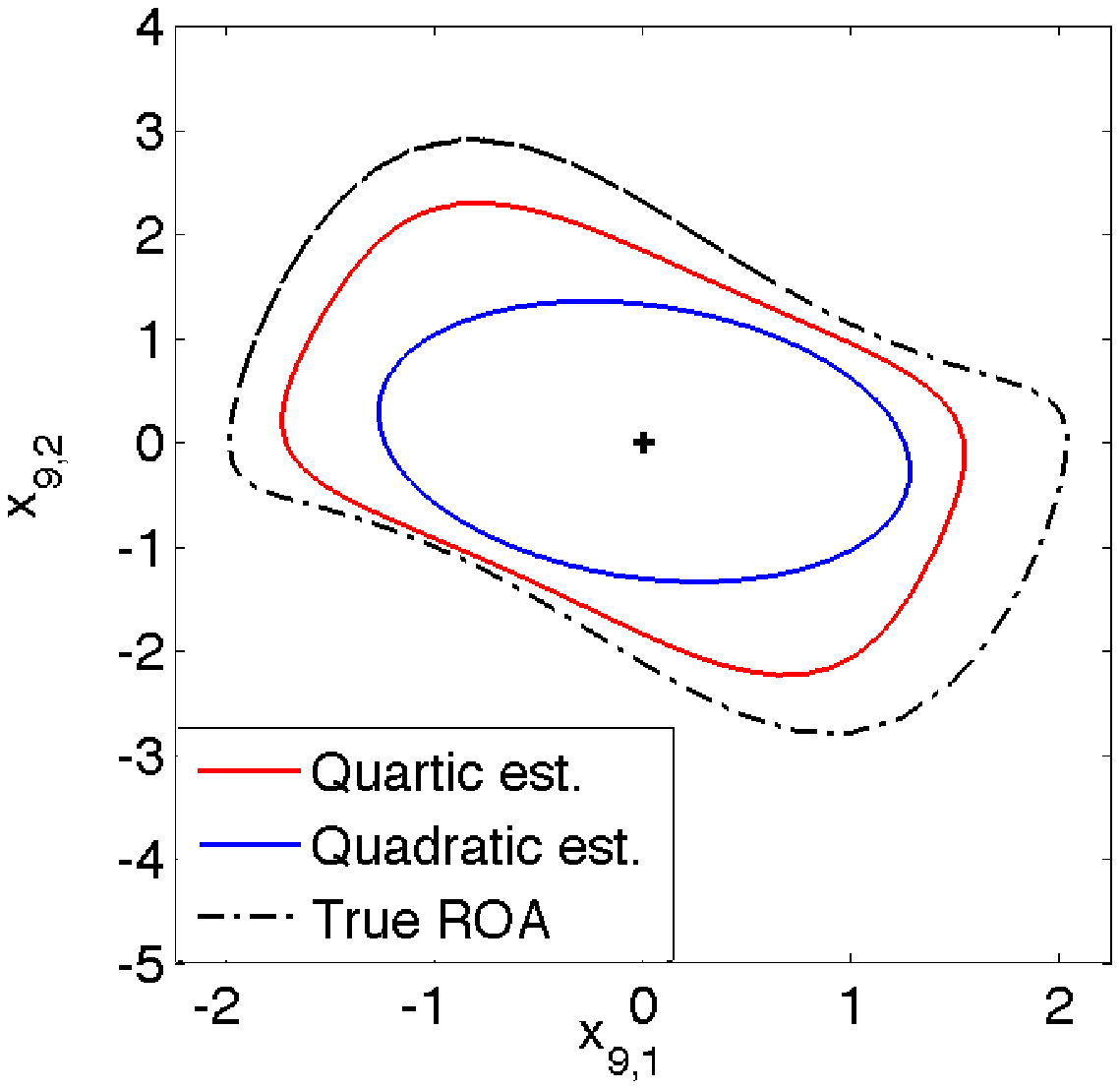}\label{F:ROAcompare}
}\hspace{0.001in}
\subfigure[`Self-decay' rates for quadratic LFs]{
\includegraphics[scale=0.36]{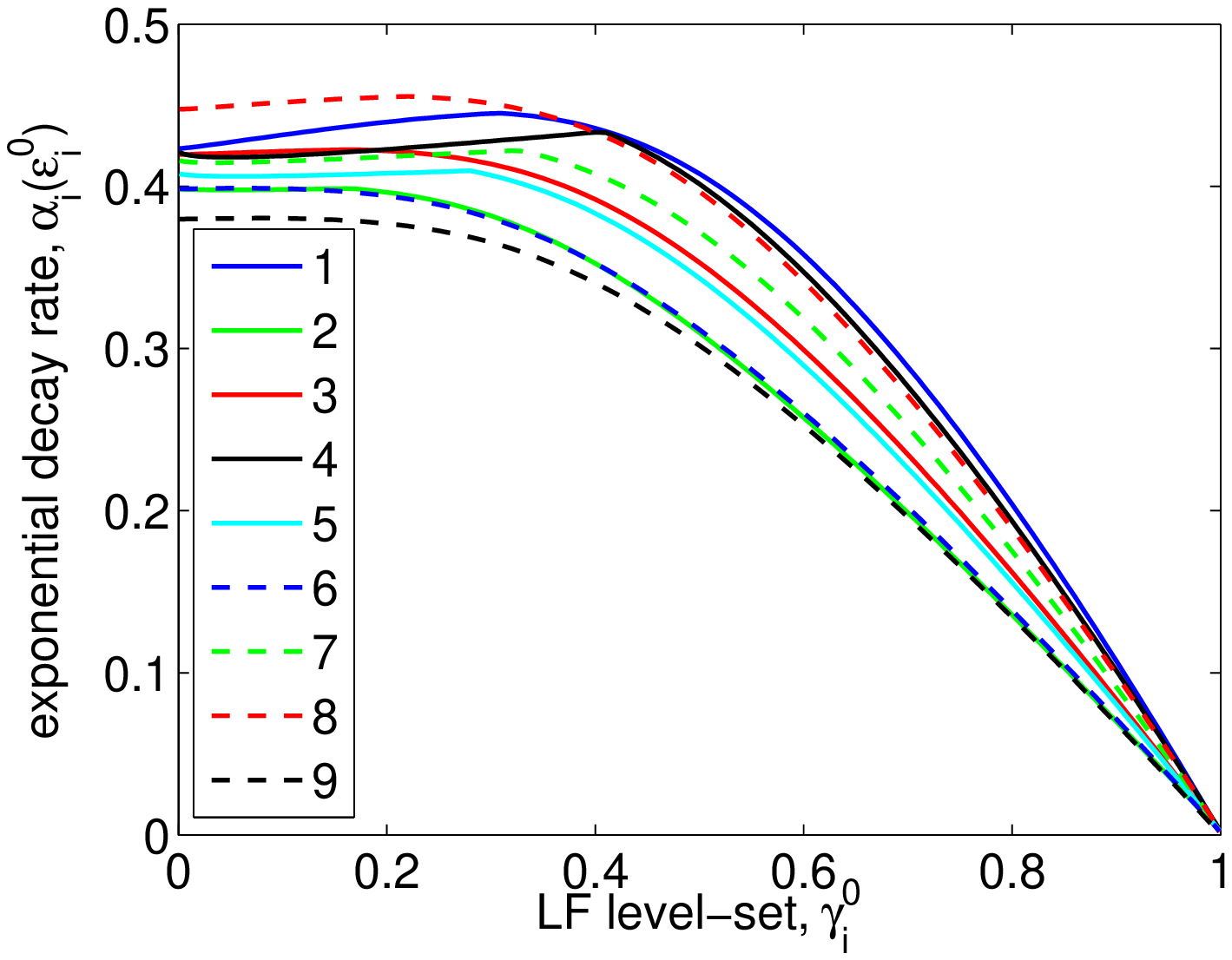}\label{F:decay_quad}
}\hspace{0.001in}
\subfigure[`Self-decay' rates for quartic LFs]{
\includegraphics[scale=0.36]{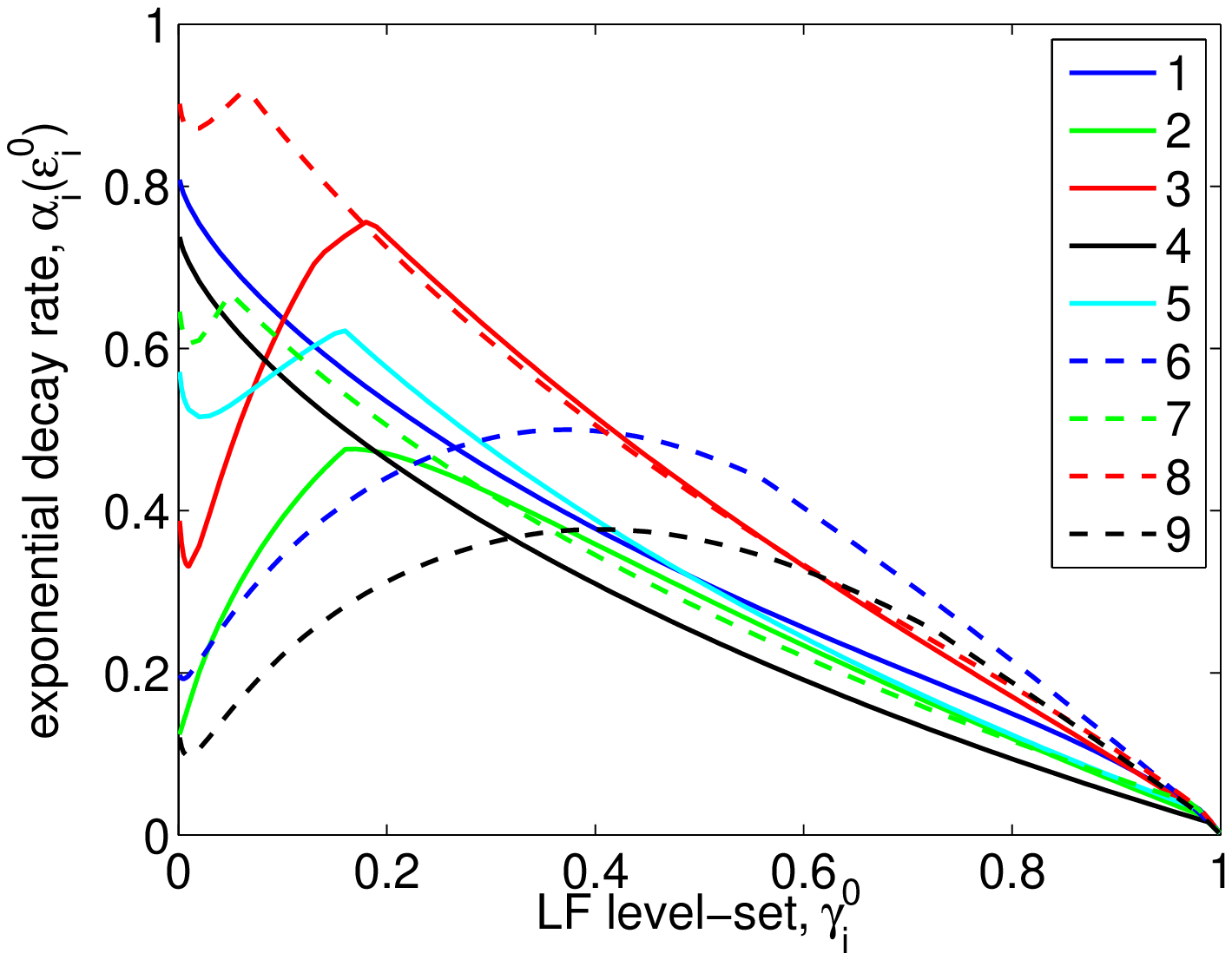}\label{F:decay_quart}
}\caption[]{Characteristics of subsystem LFs: (a) comparison of estimated ROAs, (b)-(c) exponential `self-decay' rates.}
\end{figure*}  
\section{Numerical Example}\label{S:results}
 We consider a network of nine modified Van der Pol `oscillators' \cite{van:1926}, with parameters of each oscillator chosen to make them individually stable. Each Van der Pol is treated as an individual subsystem, with the following interconnections,
\begin{align}
\begin{array}{lll}
\mathcal{N}_1:\left\lbrace 1, 2, 5, 9\right\rbrace & \mathcal{N}_2:\left\lbrace 2, 1, 3\right\rbrace & \mathcal{N}_3:\left\lbrace 3, 2, 8\right\rbrace\\ 
\mathcal{N}_4:\left\lbrace 4, 6, 7\right\rbrace & \mathcal{N}_5:\left\lbrace 5, 1, 6\right\rbrace & \mathcal{N}_6:\left\lbrace 6, 4, 5\right\rbrace\\
\mathcal{N}_7:\left\lbrace 7, 4, 8, 9\right\rbrace & \mathcal{N}_8:\left\lbrace 8, 3, 7\right\rbrace & \mathcal{N}_9:\left\lbrace 9, 1, 7\right\rbrace\,.
\end{array}
\end{align}
Each {subsystem $i\!\in\!\lbrace 1,2,\dots,9\rbrace$} has two state variables, $x_i=\left[\,x_{i,1}~\,x_{i,2}\,\right]^T$. After shifting the equilibrium point to the origin (see Appendix\,\ref{A:model} for details), the subsystem dynamics in the presence of the neighbor interactions are given by 
\begin{subequations}\label{E:VdP}
\begin{align}
&f_i(x_i)\!=\! \left[x_{i,2}\,,~\mu_i\,x_{i,2}(c_i^{(1)}\!\!-\!c_i^{(2)}x_{i,1}\!-\!x_{i,1}^2) \!-\! c_i^{(3)}x_{i,1}\right]^T \\
&				g_{ij}(x_i,x_j)\!=\! \left[0\,,~\beta_{ij}^{(1)}\,x_{j,2} + \beta_{ij}^{(2)}\,x_{j,2}\,x_{i,1}\right]\,.
\end{align}\end{subequations}
where, $c_i^{(1)}\!=\!1\!-\!\left(0.5\,c_i^{(2)}\right)^2$, $c_i^{(3)}\!=\!1\!-\!{\sum}_{j\in\mathcal{N}_i\lbrace i\rbrace}({0.5\,\beta_{ij}^{(2)}c_i^{(2)}}\!-\!\beta_{ij}^{(1)})$, $\mu_i\,,\,\beta_{ij}^{(1)}$ and $\beta_{ij}^{(2)}$ are chosen randomly and $c_i^{(2)}$ are related to the equilibrium point before shifting. 
Polynomial LFs for the isolated (no interaction) subsystems are computed using the \textit{expanding interior algorithm} (Section\,\ref{S:problem}). {Fig.\,\ref{F:ROAcompare} shows that a quartic LF estimates the `true' ROA of the isolated subsystems (obtained via time-reversed simulation) better than a quadratic LF. However, a better estimate of the isolated ROAs does not necessarily translate into better stability certificates for the interconnected system, as illustrated later.} 
The `self-decay rates', from \eqref{E:self_decay}, are plotted in Figs.\,\ref{F:decay_quad}-\ref{F:decay_quart} for a range of level-sets from $0$ to $1$. For each subsystem, as $\gamma_i^0$ approaches 1, $\alpha_i(\gamma_i^0)$ approaches 0. Thus it is impossible to obtain a Hurwitz comparison matrix when the initial conditions lie close to the boundary of the estimated ROAs. Moreover, note that the evolution of the self-decay rates is generally non-monotonic. { Thus an attempt to find a single CS valid all the way to the origin is generally difficult, since the row-sum values of the single comparison matrix will be limited by the lowest self-decay rate. In such a case, a multiple CS approach, however, can still guarantee exponential convergence to some level-sets close to the origin. 

We compare the traditional and the direct approaches of computing a CS (using the quadratic LFs), in Fig.\,\ref{F:CompMat}. Choosing $\gamma_1^0\!=\!\gamma_2^0\!=\!\dots\!=\!\gamma_9^0\,$, and varying their values, we compute the comparison matrices in \eqref{E:Vaij} using SOS-based direct approach in \eqref{E:sos_A}, by replacing the constraint \eqref{E:TwoCond_Hurwitz} by an objective of minimizing $\sum_j a_{ij}~\forall i\,$. Also, we find the comparison matrices using the traditional approach, from \eqref{E:LF_Weiss} and \eqref{E:cond_A}. 
\begin{figure}[thpb]
      \centering
	\includegraphics[scale=0.36]{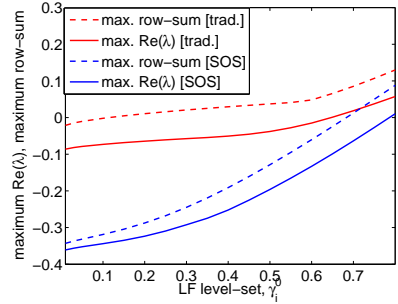}
	 \caption{{Stability properties of the single comparison matrices obtained via traditional and direct methods (using quadratic LFs})}
      \label{F:CompMat}
   \end{figure}   
The maximum of the real parts of the eigenvalues (denoted by $\text{Re}(\lambda)$) and the maximum row-sum of the comparison matrices are plotted, for varying level-sets. Clearly, the SOS-based direct method yields improved stability certificates. {Further note that the maximal (uniform) level-set for which the maximum row-sum value is negative gives \textit{an estimate} of the ROA of the full interconnected system. Thus $\bigcap_{i=1}^9\mathcal{D}[0.6831]$ is \textit{an estimate} of the ROA. Next we compare the performances of the single CS approach and the multiple CSs approach (with and without the parallel computation), in Fig.\,\ref{F:CompAlg}. For each subsystem-i, we plot the maximal (uniform) level-set for which either the row-sum is negative (single CS), or a strict convergence is achieved at iteration-0, i.e. $\gamma_i^1\!<\!\gamma_i^0$\, (multiple CS). When not considering the parallel implementation, the multiple CS approach outperforms the single CS in estimating the invariance (since we focus only at iteration-0). However, the parallel implementation, while achieving computational tractability for larger systems, yields more conservative certificates.

\begin{figure}[thpb]
      \centering
	\includegraphics[scale=0.4]{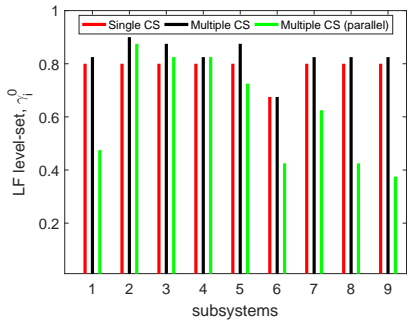}
      \caption{{Comparison of the single CS (`red') and multiple CSs approaches via the distributed construction ('black') and the distributed parallel construction ('green').}}
      \label{F:CompAlg}
   \end{figure}      
   } 

{Next we use an example to illustrate a couple of key observations. 
Fig.\,\ref{F:example_64} shows the stability analysis results on an arbitrarily generated initial condition (or disturbance) using both quadratic and quartic LFs, and multiple CSs. Note in Fig.\,\ref{F:LF_CS_quad_64} that the initial level-set lie outside the estimated ROA obtained in Fig.\,\ref{F:CompMat}. By allowing the analysis to be dependent on the particular initial condition, we are able to find a \textit{suitably} shaped stability region. 
Further note that, while the quadratic LFs-based multiple CSs analysis certifies exponential stability (in Fig.\,\ref{F:LF_CS_quad_64}), the quartic LFs can only guarantee exponential convergence to a domain $\bigcap_{i=1}^9\mathcal{D}_i[\gamma_i^*]$ very close to the origin, with $\gamma_2^*\!=\!0.023$ and $\gamma_3^*\!=\!0.016$ (in Fig.\,\ref{F:LF_CS_quart_64}). In fact, this domain of convergence is characteristic of the system and the (quartic) LFs used, and is independent of the initial condition. Referring to Fig.\,\ref{F:decay_quart}\,, the low self-decay rates of the quartic LFs for subsystems 2 and 3 explain the convergence away from the origin (Corollary\,\ref{C:convergence}). Thus, while the quartic LFs may yield better estimates of the isolated ROAs (Fig.\,\ref{F:ROAcompare}) compared to the quadratic LFs, and hence are able to analyze a larger number of initial conditions, they fail to certify exponential stability. This suggests that it may be useful to switch from quartic LFs to the quadratic LFs as the $\gamma_i^k\,\forall (k,i)$ decrease. We also noted that, in the quartic LF-based analysis subsystems-6 and 7 underwent an expansion (Phase 1) from $(v_6^0,v_7^0)\!=\!(0.015,0.002)$ to $(\gamma_6^0,\gamma_7^0)\!=\!(0.151,0.013)$, while none of the subsystems underwent expansion using quadratic LF-based analysis. 
}  } 

\begin{figure*}[thpb]
\centering
\subfigure[System states under disturbance]{
\includegraphics[scale=0.36]{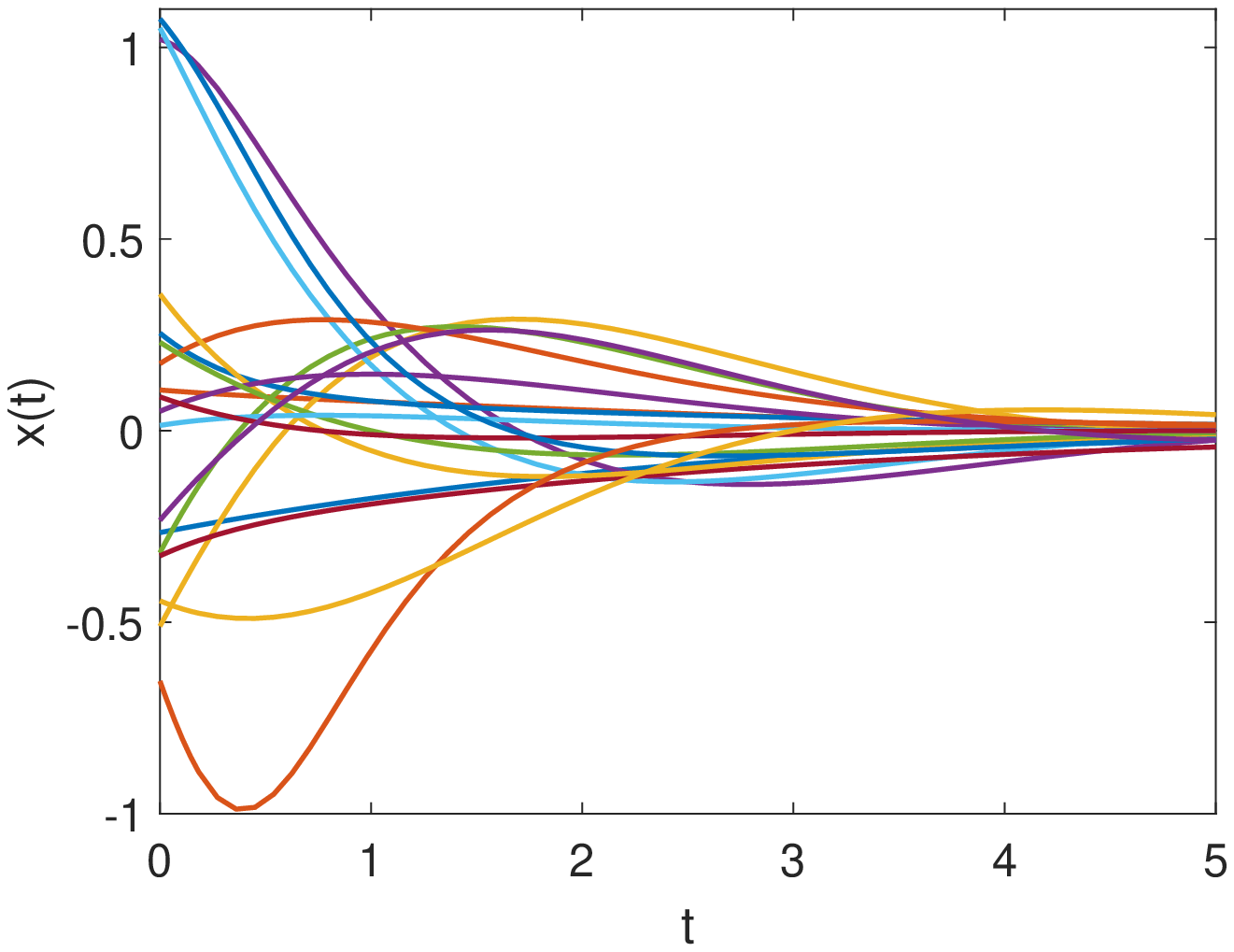}\label{F:states_64}
}\hspace{0.001in}
\subfigure[Quadratic LFs analysis]{
\includegraphics[scale=0.36]{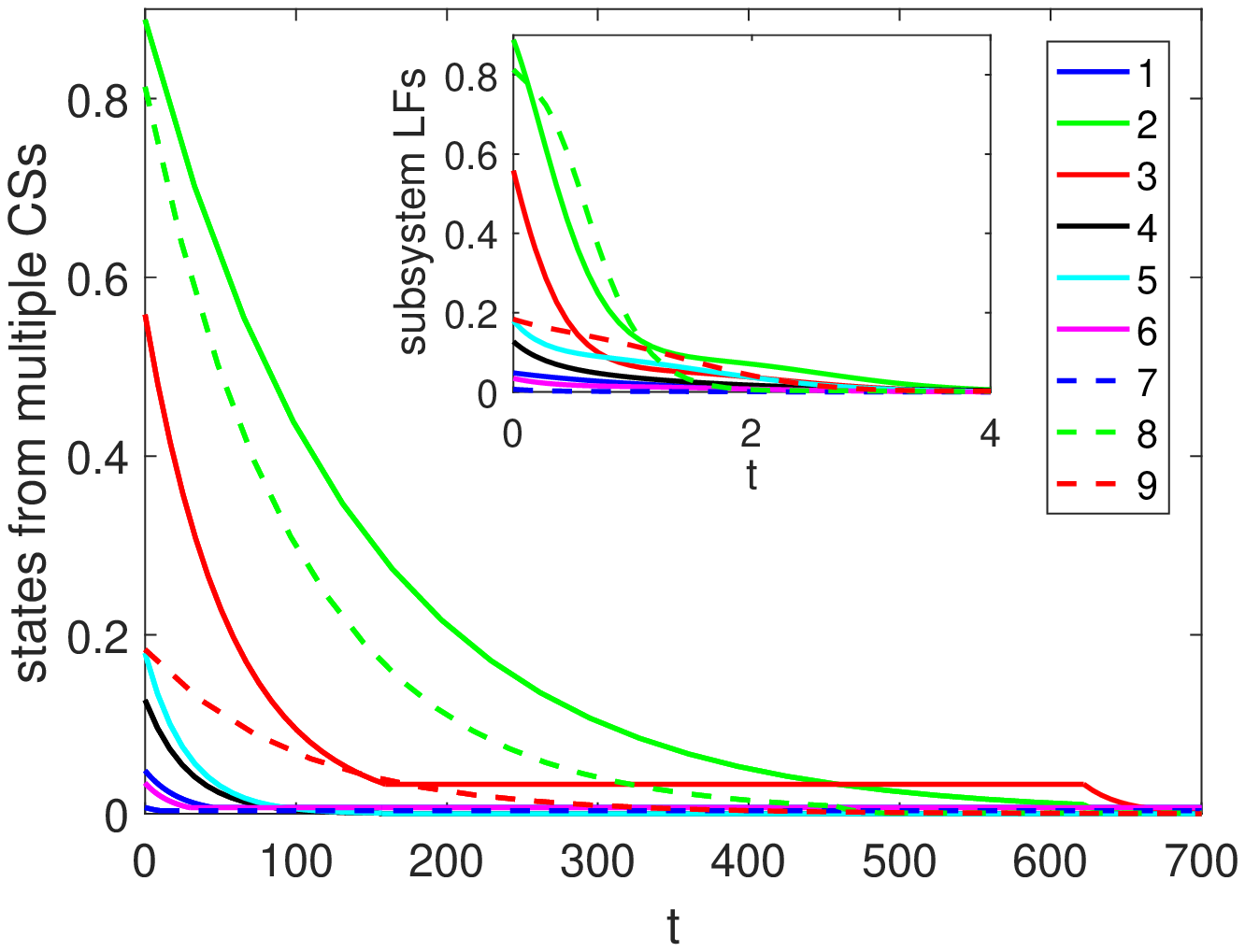}\label{F:LF_CS_quad_64}
}\hspace{0.001in}
\subfigure[Quartic LFs analysis]{
\includegraphics[scale=0.36]{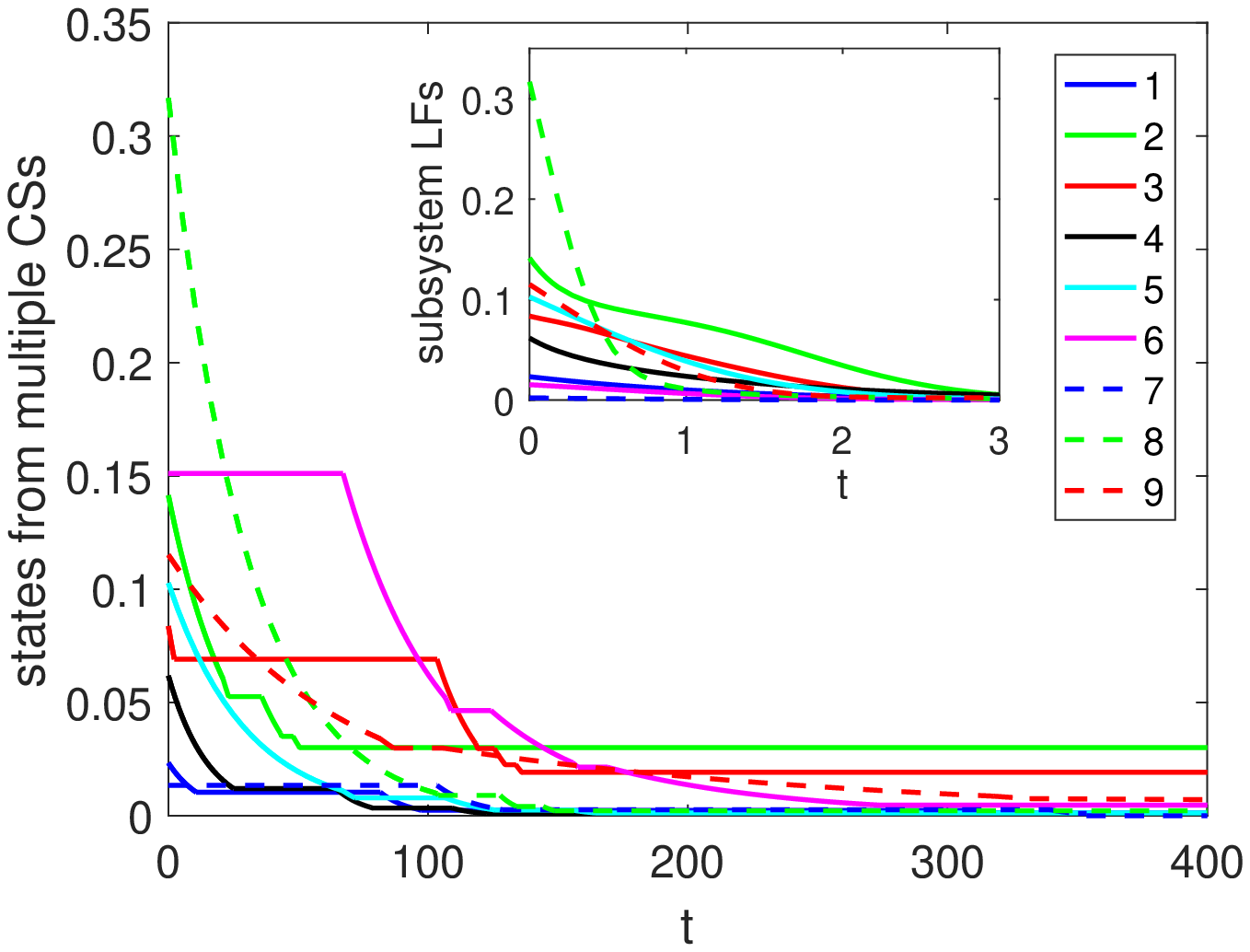}\label{F:LF_CS_quart_64}
}\caption[]{Analysis of (a) a given stable initial condition using (b) quadratic and (c) quartic LFs, and multiple CSs.}\label{F:example_64}
\end{figure*} 
\begin{remark}
Note that the algorithm estimates the exponential convergence rates rather conservatively. While this issue may be resolved by constraining the row-sum values to be less than some chosen negative number, it will likely delay the convergence of the stability algorithm.
\end{remark}
   
\section{Conclusion}\label{S:concl}

In this article we have used vector Lyapunov functions to design an iterative, distributed and parallel algorithm to certify exponential stability of a nonlinear network, under initial disturbances. The algorithm requires one-time computation of Lyapunov functions of the isolated subsystems, and minimal real-time communications between the neighboring subsystems. It is shown that the proposed SOS-based direct approach towards computation of the \textit{single comparison system} leads to less conservative certificates than the traditional methods. Further, a generalization has been proposed via \textit{multiple comparison systems}, which has been found to yield improved results compared to the \textit{single comparison system} approach. Using the pairwise interactions, a parallel implementation is also proposed, which enables the algorithm to scale up smoothly with the size of the largest neighborhood. The distributed stability analysis algorithm has been tested on an arbitrary network of nine Van der Pol systems, using vectors of  quadratic and quartic Lyapunov functions.  It is easy to visualize a multi-agent distributed coordinated control framework where each subsystem (`agent') will coordinate with its neighbors to design \textit{local} control policies to stabilize the system under disturbances. Finally, it would be interesting to explore the applicability of the proposed algorithm on real-world problems, such as the transient stability analysis of large-scale structure-preserving power system networks.

\begin{ack}                               
The authors are grateful to the U.S. Department of Energy for supporting the research presented in here, through LANL/LDRD program,  
\end{ack}

\bibliographystyle{plain}        
\bibliography{autosam,references}           

\appendix
\section{Model Description}\label{A:model}
The subsystem dynamics in the original state variables (i.e. before shifting), $\tilde{x}_i\!=\![\tilde{x}_{i,1} ~\tilde{x}_{i,2}]^T$, is given by $\dot{x}_i\!=\!\tilde{f}_i(\tilde{x}_i)+\sum_{j\in\mathcal{N}_i\backslash\left\lbrace i\right\rbrace}\tilde{g}_{ij}(\tilde{x}_i,\tilde{x}_j)~\forall i\,,$ where $\tilde{f}_i(\tilde{x}_i)\!=\![\tilde{x}_{i,2}\,,~\mu_i\,\tilde{x}_{i,2}\,(1\!-\!\tilde{x}_{i,1}^2) - \tilde{x}_{i,1}]^T$ and $\tilde{g}_{ij}(\tilde{x}_i,\tilde{x}_j)\!=\!c_{ij} + \tilde{\beta}_{ij}^{(1)}(x_{i,1}\!-\!x_{j,2}) + {\beta}_{ij}^{(2)}\,\tilde{x}_{j,2}\,\tilde{x}_{i,1}\,,$ where, $\mu_i\!\in\![-3,-1]$, $c_{ij}\!\in\![-0.2,0.2]$, $\tilde{\beta}_{ij}^{(1)}\!\in\![-0.1,0.1]$ and ${\beta}_{ij}^{(2)}\!\in\![-0.1,0.1]$ are chosen randomly. By shifting the equilibrium point $\tilde{x}_i^* \!=\!\left\lbrace \sum_{j}c_{ij}/(1\!-\!\sum_j\tilde{\beta}_{ij}^{(1)}),\,0\right\rbrace\,\forall i\,,$ to origin with $x_i\!=\!\tilde{x}_i-\tilde{x}_i^*~\forall i\,$, we obtain \eqref{E:VdP}, where $c_i^{(2)}\!=\!2\sum_{j}c_{ij}/(1\!-\!\sum_j\tilde{\beta}_{ij}^{(1)})\,$, and $\beta_{ij}^{(1)}\!=0.5\,\beta_{ij}^{(2)}c_i^{(2)}\!-\!\tilde{\beta}_{ij}^{(1)}$.

\end{document}